\documentclass[twoside,11pt]{article}
\usepackage[preprint]{jmlr2e}
\usepackage{amsmath,mathtools}
\usepackage{enumitem,microtype,booktabs,tabularx,array,multirow,xcolor,needspace}
\hypersetup{
pdftitle={Lower Bounds for Stochastic First-Order Algorithms with Variance Reduction in Nonconvex--Concave Minimax Optimization}, pdfauthor={Jiayi Song and Zi Xu}}
\setlist{nosep}
\allowdisplaybreaks[2]
\numberwithin{equation}{section}
\numberwithin{theorem}{section}

\ShortHeadings{Lower Bounds for SFO Algorithms with VR in Minimax Optimization}{Song and Xu}
\firstpageno{1}

\title{Lower Bounds for Stochastic First-Order Algorithms with Variance
Reduction in Nonconvex--Concave Minimax Optimization}
\author{\name Jiayi Song \email songjy715@shu.edu.cn \\
\name Zi Xu\thanks{Corresponding author.} \email xuzi@shu.edu.cn \\
\addr Department of Mathematics, College of Sciences \\
Shanghai University, Shanghai 200444, China}

\renewenvironment{proof}[1][Proof]
{\par\noindent\textbf{#1.}\enspace\ignorespaces} {\unskip\nobreak\hfill\BlackBox\par\medskip}

\begin{document}

\maketitle

\begin{abstract}
We establish complexity lower bounds for stochastic first-order
methods in nonconvex--concave minimax optimization that remain
valid when variance reduction is allowed.
Our main contribution is a nonconvex--concave lower bound for
an algorithm class that accommodates variance reduction,
rather than restricting algorithms to exclude it.
We consider zero-respecting randomized algorithms with arbitrary
finite nonadaptive batches of same-sample gradient queries,
an access model that accommodates variance-reduction methods.
Our construction limits coordinate revelation across oracle
rounds despite this stronger access.
For objectives with an $L$-Lipschitz joint gradient, a compact
convex dual domain of Euclidean radius at most $D_Y$, and an
initial primal value-function gap at most $\Delta$, we measure
stationarity by the gradient norm of the Moreau envelope of the
constrained primal value function with parameter $1/(2L)$.
Under an unbiased stochastic oracle with variance at most
$\sigma^2$ and mean-square smoothness, we prove the oracle-round
lower bound
$\Omega\!\left(
L^2D_Y\Delta\varepsilon^{-3}
+
L^3D_Y^2\Delta\sigma^2\varepsilon^{-6}
\right)$
in the stated parameter regime.
The bound therefore captures complexity barriers that persist
even for algorithms using variance reduction.
We also establish complementary lower bounds for
nonconvex--strongly-concave minimax optimization.
With dual strong-concavity parameter $\mu$ and condition number
$\kappa:=L/\mu$, combining our stochastic constructions with a
deterministic baseline for the same algorithm class yields
$\Omega\!\left(
L\Delta\sqrt{\kappa}\,\varepsilon^{-2}
+
L\Delta\kappa\sigma^2\varepsilon^{-4}
\right)$
under bounded variance, and
$\Omega\!\left(
L\Delta\sqrt{\kappa}\,\varepsilon^{-2}
+
\Delta\bar L\sigma\kappa^{3/2}\varepsilon^{-3}
\right)$
under additional mean-square smoothness with constant $\bar L$,
in their respective parameter regimes.
Together, these results quantify accuracy-, noise-, and
conditioning-dependent barriers across the concave and
strongly concave regimes.
\end{abstract}

\begin{keywords}nonconvex--concave minimax optimization, stochastic first-order lower bounds, mean-squared smoothness, variance reduction, zero-respecting algorithms
\end{keywords}

\section{Introduction}\label{sec:introduction}
Minimax optimization provides a natural framework for robust learning, adversarial training, and related problems in which a decision is optimized against a worst-case response~\cite{lin-gda,sreda}.
We study the stochastic first-order oracle complexity of problems of the form
\begin{equation*}
\min_{x\in X}\Phi(x),
\qquad
\Phi(x):=\max_{y\in Y}F(x,y),
\end{equation*}
where $X\subseteq\mathbb{R}^n$ is closed and convex,
$Y\subseteq\mathbb{R}^p$ is compact and convex, and $F$ is jointly smooth.
Our main focus is the nonconvex--concave (NC--C) setting, in which $F$ may be nonconvex in $x$ and is concave in $y$.
We also consider the nonconvex--strongly-concave (NC--SC) setting as a complementary regime.
Strong concavity guarantees a unique dual maximizer and a differentiable primal value function, whereas the NC--C value function may be nonsmooth.
We therefore measure stationarity through the Moreau envelope of the constrained primal value function.

Existing research has established first-order upper and lower bounds for both minimax regimes.
In particular, deterministic NC--C optimization has an $\mathcal{O}(\varepsilon^{-3})$ complexity dependence, while stochastic methods attain an $\mathcal{O}(\varepsilon^{-6})$ dependence under bounded variance and mean-squared smoothness~\cite{wu-ncc,pan-ncc,lin-ppa,spde}.
However, some existing stochastic lower bounds restrict oracle access in ways that exclude variance reduction.
They therefore do not directly resolve whether the same complexity barrier persists for algorithms allowed to use coupled gradient evaluations with a shared random sample.
This distinction matters because variance reduction improves the stochastic accuracy dependence in single-level nonconvex optimization~\cite{arjevani,spider}.

Our central goal is to establish lower bounds for stochastic first-order NC--C minimax optimization \emph{when variance reduction is allowed}.
We construct a mean-square-smooth stochastic oracle and prove a lower bound for zero-respecting algorithms that permits the same-sample queries used by variance-reduction methods.
The result retains the
$\mathcal{O}(\varepsilon^{-6})$ stochastic dependence, extending the scope of
existing lower bounds rather than introducing a new accuracy exponent.
We also develop complementary NC--SC lower bounds under bounded variance and mean-squared smoothness, using a common chain-based construction.

\subsection{Related work}\label{sec:related-work}

\paragraph{Nonconvex optimization.}
\cite{carmon,carmon-ii} developed zero-chain constructions
and established the deterministic first-order lower bound
$\Omega(L\Delta\varepsilon^{-2})$ under an $L$-Lipschitz gradient and an
initial function-value gap bounded by $\Delta$.
For stochastic first-order oracles, Arjevani et al.~\cite{arjevani} established
$\Omega(L\Delta\varepsilon^{-2}
+L\Delta\sigma^2\varepsilon^{-4})$ under bounded variance, where $\sigma^2$ bounds the oracle variance.
Under mean-squared smoothness with constant $\bar L$, they obtained
$\Omega(\Delta\bar L\sigma\varepsilon^{-3}
+(\Delta\bar L+\sigma^2)\varepsilon^{-2})$.
This matches the accuracy dependence of recursive variance-reduction methods such as SPIDER~\cite{spider}.
Their oracle model also permits the same-sample gradient evaluations considered here, but their single-level hard instances do not capture primal--dual interactions.

\paragraph{Nonconvex--concave minimax optimization.}
Under Moreau-envelope stationarity, \cite{wu-ncc} established the deterministic lower bound
$\Omega(L^2D_Y\Delta\varepsilon^{-3})$
and the bounded-variance stochastic lower bound
$\Omega(L^3D_Y^2\Delta\sigma^2\varepsilon^{-6})$,
where $D_Y$ bounds the dual-domain radius.
Pan et al.~\cite{pan-ncc} obtained the same deterministic
$\varepsilon^{-3}$ order under their normalization.
Pan and Li~\cite{pan-pl} considered a different inner structure satisfying a Polyak--\L{}ojasiewicz condition.

On the upper-bound side, deterministic first-order methods achieve essentially
$\widetilde O(L^2D_Y\Delta\varepsilon^{-3})$
complexity~\cite{lin-ppa}.
In the stochastic setting, SAPD+ and SPDE achieve $O(\varepsilon^{-6})$ oracle complexity under bounded variance, while VR-SPDE achieves $O(\varepsilon^{-6})$ under mean-squared smoothness~\cite{spde}.
These results motivate a lower bound that explicitly allows variance reduction.
Our main result supplies such a bound: it preserves the
$\mathcal{O}(\varepsilon^{-6})$ stochastic barrier under mean-squared smoothness
for a zero-respecting algorithm class with same-sample oracle access.

\paragraph{Nonconvex--strongly-concave minimax optimization.}
Let $\kappa:=L/\mu$, where $\mu$ is the dual strong-concavity parameter.
\cite{li} established the deterministic lower bound
$\Omega(L\Delta\sqrt{\kappa}\,\varepsilon^{-2})$
for zero-respecting first-order algorithms and the bounded-variance stochastic lower bound
$\Omega(L\Delta\sqrt{\kappa}\,\varepsilon^{-2}
+L\Delta\kappa^{1/3}\sigma^2\varepsilon^{-4})$.
\cite{zhang-ncsc} studied related deterministic and
finite-sum bounds using a different construction and oracle formulation.
More recently, Zhang et al.~\cite{wu-ncsc} established
$\Omega(L\Delta\kappa\sigma^2\varepsilon^{-4})$
under bounded variance and
$\Omega(\bar L\Delta\sigma\bar\kappa^{3/2}\varepsilon^{-3})$
under mean-squared smoothness, with $\bar\kappa$ denoting the condition parameter used in their formulation.
Their hard instances use a quadratic lifting of a single-level nonconvex zero-chain.

On the upper-bound side, Lin et al.~\cite{lin-ppa} established
$\widetilde O(L\Delta\sqrt{\kappa}\,\varepsilon^{-2})$
deterministic complexity.
SPDE and VR-SPDE achieve
$O(\kappa\varepsilon^{-4})$ and
$O(\kappa^{3/2}\varepsilon^{-3})$ stochastic oracle complexity,
respectively~\cite{spde}.
Our NC--SC results complement existing lower bounds through a product-weighted chain that also underlies our main NC--C construction.
Under the same stationarity criterion and stochastic oracle conventions, the stochastic terms have the same $\kappa$ and $\varepsilon$ dependences as the corresponding upper bounds.

\subsection{Main results and contributions}\label{sec:contributions}

Our main contribution is an NC--C lower bound for stochastic first-order algorithms that are allowed to use variance reduction.
The NC--SC results provide complementary bounds and demonstrate how a common construction applies to both minimax regimes.

\begin{enumerate}[leftmargin=*,label=(\roman*),itemsep=5pt]
\item \textbf{An NC--C lower bound allowing variance reduction.}
We establish
$\Omega(L^2D_Y\Delta\varepsilon^{-3}
+L^3D_Y^2\Delta\sigma^2\varepsilon^{-6})$ for zero-respecting algorithms under an unbiased, bounded-variance, mean-square-smooth oracle; see Theorem~\ref{nccmss:thm:zr}.
The algorithm class permits same-sample queries and therefore allows variance reduction.
We realize a smooth nonconvex zero-chain as the value function of a concave minimax problem with a bounded dual domain.
A dual path delays coordinate revelation, while a Bernoulli gate preserves this delay and the value function under mean-squared smoothness.
The resulting stochastic lower bound has the same
$\mathcal{O}(\varepsilon^{-6})$ dependence as VR-SPDE~\cite{spde}.

\item \textbf{A complementary bounded-variance NC--SC lower bound.}
We establish
$\Omega(L\Delta\sqrt{\kappa}\,\varepsilon^{-2}
+L\Delta\kappa\sigma^2\varepsilon^{-4})$; see Theorem~\ref{thm:intro-bv}.
A quadratic strongly-concave dual lift produces the deterministic propagation term, and a clipped stochastic dual potential hides the next block with positive probability, yielding the stochastic term.

\item \textbf{A complementary mean-square-smooth NC--SC lower bound.}
We establish
$\Omega(L\Delta\sqrt{\kappa}\,\varepsilon^{-2}
+\Delta\bar L\sigma\kappa^{3/2}\varepsilon^{-3})$; see Theorem~\ref{thm:mss-sum}.
We retain the delayed strongly-concave chain and replace the clipped noise potential by a smooth stochastic potential whose sample gradients satisfy both bounded variance and mean-squared smoothness.
These NC--SC results complement recent work rather than claim priority for this regime.
\end{enumerate}

Table~\ref{tab:intro-comparison} summarizes selected stochastic first-order upper and lower bounds.

\begin{table}[t]
\centering
\caption{Selected stochastic first-order complexity bounds for NC--SC and
NC--C minimax optimization.
Only the dependence on $\kappa$ and
$\varepsilon$ is displayed.
MSS denotes mean-squared smoothness,
and ZR denotes zero-respecting algorithms.}
\label{tab:intro-comparison}
\setlength{\tabcolsep}{4pt}
\renewcommand{\arraystretch}{1.18}
\small
\resizebox{\textwidth}{!}{%
\begin{tabular}{@{}l c c c l l@{}}
\toprule
Problem class
& MSS
& Algorithm class
& Result
& Reference
& Oracle complexity \\
\midrule

\multirow{7}{*}{NC--SC}
& No
& General
& Upper bound
& SPDE~\cite{spde}
& $O(\kappa\varepsilon^{-4})$ \\

& No
& ZR
& Lower bound
& Li et al.~\cite{li}
& $\Omega(\kappa^{1/3}\varepsilon^{-4})$ \\

& No
& ZR
& Lower bound
& Zhang et al.~\cite{wu-ncsc}
& $\Omega(\kappa\varepsilon^{-4})$ \\

& \textbf{No}
& \textbf{ZR}
& \textbf{Lower bound}
& \textbf{Our result}
& $\boldsymbol{\Omega(\kappa\varepsilon^{-4})}$ \\

\cmidrule(lr){2-6}

& Yes
& General
& Upper bound
& VR-SPDE~\cite{spde}
& $O(\kappa^{3/2}\varepsilon^{-3})$ \\

& Yes
& ZR
& Lower bound
& Zhang et al.~\cite{wu-ncsc}
& $\Omega(\kappa^{3/2}\varepsilon^{-3})$ \\

& \textbf{Yes}
& \textbf{ZR}
& \textbf{Lower bound}
& \textbf{Our result}
& $\boldsymbol{\Omega(\kappa^{3/2}\varepsilon^{-3})}$ \\

\midrule

\multirow{2}{*}{NC--C}
& Yes
& General
& Upper bound
& VR-SPDE~\cite{spde}
& $O(\varepsilon^{-6})$ \\

& \textbf{Yes}
& \textbf{ZR}
& \textbf{Lower bound}
& \textbf{Our result}
& $\boldsymbol{\Omega(\varepsilon^{-6})}$ \\

\bottomrule
\end{tabular}%
}
\end{table}

\paragraph{Notation, definitions, and assumptions.}
The norm $\|\cdot\|$ and inner product $\langle\cdot,\cdot\rangle$ are Euclidean.
For a differentiable vector-valued map $G$, $\nabla G$ denotes its Jacobian, so $\nabla G(u)^\top v$ is the gradient of $\langle G(u),v\rangle$.
A function is $C^{1,1}$ if it is continuously differentiable with Lipschitz gradient.
The notation $O$ and $\Omega$ suppresses constant factors, while $\widetilde O$ additionally suppresses logarithmic factors.

For our problem classes, $L$ is the joint gradient Lipschitz constant, $D_Y$ bounds the dual-domain radius, and $\Delta$ bounds the initial value-function gap: $\max_{y\in Y}\|y\|\le D_Y$ and $\Phi(0)-\inf_{x\in X}\Phi(x)\le\Delta$, with $0\in X$.
In the NC--SC setting, $\mu>0$ is the dual strong-concavity parameter and $\kappa=L/\mu$.
Let $\phi:=\Phi+\iota_X$, where $\iota_X$ is zero on $X$ and $+\infty$ outside $X$.
For the NC--C class, we use $S_{\rm OS}(x):=\|\nabla\phi_{1/(2L)}(x)\|$, where $\phi_{1/(2L)}$ is the Moreau envelope of $\phi$.
A random primal output $\widehat x$ is $\varepsilon$-stationary in expectation when $\mathbb E S_{\rm OS}(\widehat x)\le\varepsilon$.
For the NC--SC class, we instead use $\mathbb E\|\nabla\Phi(\widehat x)\|\le\varepsilon$.

For a joint query point $z=(x,y)$ and random sample $\omega$, the oracle returns $G(z;\omega)$ satisfying $\mathbb E_\omega[G(z;\omega)]=\nabla F(z)$ and $\mathbb E_\omega\|G(z;\omega)-\nabla F(z)\|^2\le\sigma^2$.
Under mean-squared smoothness, it also satisfies $\mathbb E_\omega\|G(z;\omega)-G(z';\omega)\|^2 \le\bar L^2\|z-z'\|^2$, with the same sample used at both points.
Zero-respecting algorithms cannot activate coordinates before they are revealed by oracle responses, but our mean-squared smoothness model permits same-sample queries.
Formal definitions and parameter conditions are given in Section~\ref{common:models} and the corresponding theorems.
\paragraph{Organization.}\label{sec:organization}
Section~\ref{common:models} defines the function classes, stationarity criteria, and oracle models.
Section~\ref{sec:common-chain} develops the common smooth chain.
Section~\ref{nccmss:sec:zr} proves the NC--C result.
Section~\ref{sec:ncsc} develops the NC--SC constructions and establishes the lower bounds under bounded variance and mean-squared smoothness.
Section~\ref{sec:conclusion} concludes the paper.

\section{Problem setup and oracle complexity}
\label{common:models}

We specify the problem classes, stationarity criterion, and stochastic
oracle access used throughout the paper. We then define the
zero-respecting algorithm class and distinguish oracle rounds from
individual gradient evaluations.

\subsection{Problem classes and stationarity}

Let $X\subseteq\mathbb R^n$ and $Y\subseteq\mathbb R^p$ be nonempty
closed convex sets, with $0\in X$ and $Y$ bounded. For a continuously
differentiable objective $F:X\times Y\to\mathbb R$, define
$\Phi(x):=\max_{y\in Y}F(x,y)$ and
$\Phi_*:=\inf_{x\in X}\Phi(x)$.
The maximum is attained because $Y$ is compact.
Differentiability on a closed domain is understood through a
continuously differentiable extension to a neighborhood of that domain.

\begin{definition}[Nonconvex--concave function class]
	\label{def:unified-class}
	For $L,\Delta,D_Y>0$, let
	$\mathcal F_{\rm NC\text{-}C}(L,\Delta,D_Y)$
	denote the class of such objectives, over arbitrary finite dimensions,
	satisfying the following conditions:
	\begin{enumerate}[label=(\roman*),leftmargin=*]
		\item \textbf{Joint smoothness.}
		The joint gradient is $L$-Lipschitz:
		\begin{equation}
			\label{eq:class-smoothness}
			\|\nabla F(z)-\nabla F(z')\|
			\le L\|z-z'\|,
			\qquad z,z'\in X\times Y.
		\end{equation}
		
		\item \textbf{Dual concavity.}
		For every $x\in X$, the function $F(x,\cdot)$ is concave on $Y$.
		
		\item \textbf{Bounded dual domain.}
		The dual radius satisfies $\max_{y\in Y}\|y\|\le D_Y$.
		
		\item \textbf{Bounded initial gap.}
		The value function satisfies
		\begin{equation}
			\label{eq:class-gap}
			\Phi(0)-\Phi_*\le\Delta.
		\end{equation}
	\end{enumerate}
	No convexity is required in the primal variable.
\end{definition}

In the NC--SC setting, dual concavity is strengthened to
$\mu$-strong concavity for some $\mu>0$, and
$\kappa:=L/\mu$ denotes the condition number.
Any additional domain or parameter restrictions are specified in the
corresponding lower-bound statements.

\begin{definition}[Optimization stationarity]
	\label{def:optimization-stationarity}
	Let $\phi:=\Phi+\iota_X$, where $\iota_X$ is zero on $X$ and
	$+\infty$ outside $X$. Its Moreau envelope at parameter $1/(2L)$ is
	$\phi_{1/(2L)}(x):=
	\min_{u\in X}\{\Phi(u)+L\|u-x\|^2\}$,
	for $x\in\mathbb R^n$.
	Since $\phi$ is proper, closed, and $L$-weakly convex, the minimizer
	$x^+(x)$ is unique, and
	$\nabla\phi_{1/(2L)}(x)=2L(x-x^+(x))$.
	
	Define
	$S_{\rm OS}(x):=\|\nabla\phi_{1/(2L)}(x)\|$.
	A random output $\widehat x\in X$ is
	$\varepsilon$-stationary if
	$\mathbb E[S_{\rm OS}(\widehat x)^2]\le\varepsilon^2$,
	where the expectation includes both oracle and algorithmic randomness.
\end{definition}

This criterion applies to both NC--C and NC--SC problems.
It also implies
$\mathbb E[S_{\rm OS}(\widehat x)]\le\varepsilon$
by Jensen's inequality.

\subsection{Stochastic oracle models}

\begin{definition}[Stochastic first-order oracle]
	\label{def:stochastic-oracle}
	For $\sigma\ge0$, a bounded-variance stochastic first-order oracle
	returns
	$\mathcal O_F^{\mathrm{sfo}}(z;\xi)
	=(F(z),\widehat\nabla F(z;\xi))$
	at a joint query point $z=(x,y)\in X\times Y$.
	The sample gradient is jointly measurable and satisfies
	\begin{equation*}
		\mathbb E_\xi[\widehat\nabla F(z;\xi)]=\nabla F(z),
		\qquad
		\mathbb E_\xi
		\|\widehat\nabla F(z;\xi)-\nabla F(z)\|^2
		\le\sigma^2,
		\qquad z\in X\times Y.
	\end{equation*}
	Thus, the oracle provides an exact function value and an unbiased
	stochastic joint gradient.
	
	A mean-square-smooth (MSS) oracle additionally satisfies
	\begin{equation*}
		\mathbb E_\xi
		\|\widehat\nabla F(z;\xi)-\widehat\nabla F(z';\xi)\|^2
		\le\bar L^2\|z-z'\|^2,
		\qquad z,z'\in X\times Y,
	\end{equation*}
	where the same sample $\xi$ is used at both points.
\end{definition}

The MSS condition concerns a coupled pair of sample gradients.
To make this coupling available to an algorithm, we explicitly allow
the same-sample batch access defined below.

\subsection{Algorithm class and complexity}

\begin{definition}[Zero-respecting algorithms with same-sample access]
	\label{def:zero-respecting-algorithm}
	Consider an instance with $(0,0)\in X\times Y$.
	At round $t\ge1$, an algorithm chooses a finite batch
	$z^{t,1},\ldots,z^{t,K_t}\in X\times Y$, with $K_t\ge1$,
	using only its internal randomness and responses from earlier rounds.
	The entire batch is chosen before any response from round $t$ is
	observed.
	
	The oracle then draws a fresh sample $\xi_t$, independently of the
	past, and returns
	$(F(z^{t,k}),g^{t,k})$ for each $k\in[K_t]$, where
	$g^{t,k}:=\widehat\nabla F(z^{t,k};\xi_t)$.
	All query points within the batch share the same sample.
	
	Let $\mathcal S_0:=\varnothing$, and define the revealed joint
	coordinates after round $t$ by
	\begin{equation*}
		\mathcal S_t:=
		\bigcup_{s=1}^{t}\ \bigcup_{k=1}^{K_s}
		\operatorname{supp}(g^{s,k}).
	\end{equation*}
	Let $\mathcal S_t^x:=\mathcal S_t\cap[n]$ denote the revealed primal
	coordinates, with primal coordinates indexed first in the joint vector.
	
	The algorithm is zero-respecting if, almost surely,
	$\operatorname{supp}(z^{t,k})\subseteq\mathcal S_{t-1}$
	for every query, and its primal output $\widehat x^{\,t}$ after
	$t$ rounds satisfies
	$\operatorname{supp}(\widehat x^{\,t})\subseteq\mathcal S_t^x$.
\end{definition}

The origin condition ensures that the first zero-respecting query is
feasible; it holds for the hard instances considered here.
Function values may influence the choice of subsequent queries, but
do not enlarge the revealed-coordinate set.

One batch counts as one \emph{oracle round}; its $K_t$ sample-gradient
evaluations count as $K_t$ individual oracle evaluations.
Thus, after $T$ rounds the number of gradient evaluations is
$\sum_{t=1}^{T}K_t\ge T$.
We prove lower bounds on the number of rounds in this batch-access
model, which also imply lower bounds on the number of individual
gradient evaluations.

An $\varepsilon$-stationarity complexity lower bound means that every
algorithm in the specified class, with a smaller prescribed budget,
fails to satisfy
$\mathbb E[S_{\rm OS}(\widehat x)^2]\le\varepsilon^2$
on some admissible objective--oracle pair.
The same-sample access permits coupled gradient differences used by
variance-reduction methods, subject to the zero-respecting restriction
and the nonadaptive choice of queries within each round.

\section{Common hard-instance framework}
\label{sec:common-chain}
\label{sec:deterministic-baselines}

Our constructions share a product-weighted nonconvex zero-chain.
We first state its analytic and coordinate-revelation properties.
We then transfer its gradient barrier to Moreau-envelope stationarity
and give a probabilistic bound on coordinate revelation.
The minimax lifts and stochastic oracle constructions are developed
in the subsequent sections.

\subsection{A product-weighted zero-chain}

For a scalar $u\in\mathbb R$, define the activation function
\begin{equation*}
	S(u):=
	\begin{cases}
		0, & u\le\frac14,\\
		\sin^2\!\left(\frac{2\pi}{3}(u-\frac14)\right),
		& \frac14<u<1,\\
		1, & u\ge1,
	\end{cases}
\end{equation*}
and the bounded profile
$\mathcal K(u):=\frac3{10}(1+\gamma(u)^2)-\frac25\gamma(u)$,
where
\begin{equation*}
	\gamma(u):=
	\begin{cases}
		\frac{u}{4}, & |u|\le1,\\
		\dfrac{\operatorname{sgn}(u)}4
		\left(2-\dfrac{(3-|u|)^2}{4}\right),
		& 1<|u|<3,\\
		\frac12\operatorname{sgn}(u), & |u|\ge3.
	\end{cases}
\end{equation*}
The negative-tail penalty is
\begin{equation*}
	J(u):=
	\begin{cases}
		0, & u\ge-\frac12,\\
		\frac12(u+\frac12)^2, & -\frac32<u<-\frac12,\\
		-u-1, & u\le-\frac32.
	\end{cases}
\end{equation*}

For $x\in\mathbb R^m$, let $P_0(x):=1$,
$P_i(x):=\prod_{j=1}^{i}S(x_j)$, and
$W_i(x):=P_{i-1}(x)(1-S(x_i))$ for $i\in[m]$.
Define
\begin{equation*}
	\varphi_m(x):=
	-4\sum_{i=1}^{m}S(x_i)
	+\sum_{i=1}^{m}J(x_i)
	+\sum_{i=1}^{m}W_i(x)^2\mathcal K(x_i).
\end{equation*}
For later use, write
$H_m(x):=-4\sum_{i=1}^{m}S(x_i)+\sum_{i=1}^{m}J(x_i)$,
$G_i(x):=W_i(x)\sqrt{\mathcal K(x_i)}$,
$G(x):=(G_1(x),\ldots,G_m(x))$, and
$Q_m(x):=\|G(x)\|^2$.
Then $\varphi_m=H_m+Q_m$.
The square root is well defined because
$\gamma(u)\in[-1/2,1/2]$ and $\mathcal K(u)\ge7/40$.

The product in $W_i$ vanishes whenever an earlier coordinate is at
most $1/4$. Together with the profile $\mathcal K$ and the
negative-tail penalty $J$, this yields the following properties.

\begin{lemma}[Properties of the common chain]
	\label{lem:common-chain}
	There exist universal constants $C_\varphi,c_b>0$ such that, for every
	$m\ge1$, the function $\varphi_m:\mathbb R^m\to\mathbb R$ satisfies:
	\begin{enumerate}[label=(\roman*),leftmargin=*]
		\item \textbf{Dimension-independent smoothness.}
		For all $x,x'\in\mathbb R^m$,
		$\|\nabla\varphi_m(x)-\nabla\varphi_m(x')\|
		\le C_\varphi\|x-x'\|$.
		
		\item \textbf{Initial gap.}
		$\varphi_m(0)-\inf_{x\in\mathbb R^m}\varphi_m(x)\le5m$.
		
		\item \textbf{Gradient barrier.}
		If $x_m\le1/2$, then $\|\nabla\varphi_m(x)\|\ge c_b$.
		
		\item \textbf{Zero-chain property.}
		For every $r\in\{0,\ldots,m-1\}$,
		$\operatorname{supp}(x)\subseteq[r]$ implies
		$\operatorname{supp}(\nabla\varphi_m(x))\subseteq[r+1]$,
		where $[0]:=\varnothing$.
	\end{enumerate}
\end{lemma}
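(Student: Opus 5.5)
We plan to verify the four items directly from the scalar building blocks, in the order (ii), (iv), (i), (iii), since (ii) and (iv) are bookkeeping and (i), (iii) carry the real work. First we record the scalar facts. $S,\gamma,J$ are $C^{1,1}$ on $\mathbb R$ with bounded first and second derivatives; the pieces match to first order at $u=\tfrac14,1$, $|u|=1,3$ and $u=-\tfrac12,-\tfrac32$. Moreover $S\equiv0$ near $0$ and $J\equiv0$ near $0$. Writing $\theta=\frac{2\pi}{3}(u-\frac14)$ gives the exact identity
\[
S'(u)=\tfrac{4\pi}{3}\sqrt{S(u)(1-S(u))}.
\]
We also have $\mathcal K\in[7/40,\,23/40]$, with $\mathcal K'(u)=(\tfrac35\gamma-\tfrac25)\gamma'(u)\le0$. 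Finally, the telescoping identity $W_i=P_{i-1}-P_i\ge0$ gives $\sum_iW_i=1-P_m\le1$, and hence $\sum_{i>k}W_i^2\le P_k^2$.

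For (ii), at $x=0$ we have $S(0)=J(0)=0$, $W_1=1$ and $W_i=0$ for $i\ge2$, so $\varphi_m(0)=\mathcal K(0)=3/10$. Since $J,Q_m\ge0$ and $S\le1$, we get $\inf\varphi_m\ge-4m$, and the gap is at most $4m+3/10\le5m$. For (iv), suppose $\operatorname{supp}(x)\subseteq[r]$. Then $x_{r+1}=0$, so every term $W_i^2\mathcal K(x_i)$ with $i\ge r+2$ contains the factor $S(x_{r+1})^2$. This factor vanishes together with its gradient, because $S$ is flat near $0$. For each $j\ge r+2$, the separable terms contribute $-4S'(0)+J'(0)=0$. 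Hence $\partial_j\varphi_m(x)=0$ for $j\ge r+2$.

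For (i), the separable part $H_m$ has a diagonal Hessian with uniformly bounded entries, so it suffices to bound $\nabla^2Q_m=2\nabla G^\top\nabla G+2\sum_iG_i\nabla^2G_i$ in operator norm uniformly in $m$. The obstacle here is that $P_{i-1}$ is a long product. The identity for $S'$ gives
\[
|\partial_jP_{i-1}|=\tfrac{4\pi}{3}\sqrt{(1-S_j)/S_j}\,P_{i-1}\le C\sqrt{P_{i-1}W_j}\quad(j<i),
\]
using $P_{i-1}(1-S_j)/S_j\le P_{j-1}(1-S_j)=W_j$. Similar bounds hold for second derivatives, such as $|\partial_j\partial_lP_{i-1}|\le C\sqrt{W_jW_l}$ up to bounded factors. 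We then bound $\|\nabla G\|$ by Schur's test, or by a Cauchy--Schwarz estimate of the form $\sum_{i,j}|\partial_jG_i|\,|v_j|\,|w_i|\le C\|v\|\|w\|$. This step uses $\sum_jW_j\le1$ and $\sum_iP_{i-1}^2(1-S_i)^2\le1$ to keep every constant free of $m$. The term $\sum_iG_i\nabla^2G_i$ is handled the same way, using $|G_i|\le C\,W_i$.

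For (iii), we compute
\[
\partial_k\varphi_m=-4S'_k+J'_k+P_{k-1}^2\bigl[-2(1-S_k)S'_k\mathcal K_k+(1-S_k)^2\mathcal K'_k\bigr]+\tfrac{2S'_k}{S_k}\sum_{i>k}W_i^2\mathcal K_i .
\]
The last term is at most $2\cdot\tfrac{23}{40}S'_kP_{k-1}^2S_k\le1.15\,S'_k$. Since $J'\le0$ and $\mathcal K'\le0$, this yields
\[
\partial_k\varphi_m\le-1.7\,S'_k+J'_k+W_k^2\,\mathcal K'(x_k)/S\text{-free form}\le0
\]
for every $k$: each summand is nonpositive. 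It therefore remains to exhibit one $k$ at which some summand is bounded away from $0$. Let $k$ be the first index with $x_k<1$, which exists because $x_m\le1/2$, so that $P_{k-1}=1$. If $x_k\le\tfrac12$, then $-1.7S'_k+(1-S_k)^2\mathcal K'(x_k)+J'(x_k)\le-c$ by a case check on $x_k\le-3$ (where $J'=-1$), $[-3,\tfrac14]$ (where $\mathcal K'\le-c$ or $J'\le -c$), and $(\tfrac14,\tfrac12]$. The main obstacle is the case $x_k\in(\tfrac12,1)$ with $x_k$ close to $1$, where both $S'_k$ and $1-S_k$ are small. Here we would pass to the next index and iterate along the chain, tracking $P_{k-1}$ as a product of factors $S_j$. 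Each step at which $S_j$ drops noticeably below $1$ forces $S'_j\gtrsim\sqrt{1-S_j}$ and hence a gradient of order $\sqrt{1-S_j}$ in coordinate $j$. A Cauchy--Schwarz argument over $j$ should then give $\|\nabla\varphi_m\|^2\gtrsim\sum_j(1-S_j)\gtrsim\min\{1,\,1-P_{k^*}\}$, where $k^*$ is the first index with $x_{k^*}\le\tfrac12$. In the complementary situation $P_{k^*-1}$ is bounded below, and the case $x_k\le\tfrac12$ argument applies at $k^*$ with the factor $P_{k^*-1}^2$.
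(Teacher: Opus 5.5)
Your proposal is correct and is essentially the paper's proof. Parts (ii) and (iv) coincide with it. Your final argument for (iii) is the paper's: the uniform bound $\partial_i\varphi_m\le -cS'(x_i)\le 0$, a first frontier index $k^*$, the prefix estimate $\sum_{j<k^*}(1-S(x_j))\le C\|\nabla\varphi_m\|^2$ forcing $P_{k^*-1}$ close to $1$, and then the slope of $\mathcal K$ or $J$ at $x_{k^*}$. Your route to (i) uses the same $\sqrt{W_jW_l}$ control of derivatives of the products $P_{i-1}$, applied entrywise to $\nabla^2 Q_m$ rather than through the paper's weighted sums $\sum_i a_iW_i$.

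Only cosmetic fixes are needed. The product bound should read $\sum_{j<k^*}(1-S(x_j))\ge 1-P_{k^*-1}$ rather than $1-P_{k^*}$. The preliminary choice of the first index with $x_k<1$ is superseded by $k^*$ and can be dropped.
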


The proof is given in Appendix~\ref{app:common}.
Properties (i)--(ii) control the scaling and length of the hard
instances. Property (iii) supplies a gradient barrier, while
property (iv) restricts coordinate revelation.
The minimax constructions below introduce additional delays through
dual variables and stochastic gates; those delays require separate
verification for each oracle construction.

\subsection{From a gradient barrier to optimization stationarity}

The next lemma transfers the chain's gradient barrier to the
stationarity criterion used in our lower bounds.
It concerns an unconstrained primal domain.

\begin{lemma}[Moreau-envelope barrier]
	\label{nccmss:lem:moreau-transfer}
	Let $m\ge1$, $\alpha,\beta>0$, and
	$\Phi(x):=\alpha\varphi_m(x/\beta)$ on $X=\mathbb R^m$.
	Set $\phi:=\Phi$, and let $c_b$ be the constant in
	Lemma~\ref{lem:common-chain}.
	If $\Phi$ is $L$-smooth and
	$0<b\le\min\{L\beta,\alpha c_b/(2\beta)\}$,
	then $S_{\rm OS}(x)>b$ for every $x\in\mathbb R^m$ with $x_m=0$.
\end{lemma}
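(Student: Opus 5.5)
We argue by contradiction through the proximal point. Fix $x\in\mathbb R^m$ with $x_m=0$ and let $u:=x^+(x)$ be the unique minimizer of $\Phi(u)+L\|u-x\|^2$ over $X=\mathbb R^m$. Since $\Phi$ is $L$-smooth, the objective $\Phi+L\|\cdot-x\|^2$ is $L$-strongly convex and differentiable, so uniqueness holds. The first-order condition gives
\[
\nabla\Phi(u)=2L(x-u)=\nabla\phi_{1/(2L)}(x),
\]
hence $S_{\rm OS}(x)=\|\nabla\Phi(u)\|=2L\|x-u\|$. Suppose $S_{\rm OS}(x)\le b$; the plan is to derive a contradiction by splitting on the last coordinate of $u$.

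\textbf{Case 1: $u_m\le\beta/2$.} Then $(u/\beta)_m\le 1/2$, so Lemma~\ref{lem:common-chain}(iii) gives $\|\nabla\varphi_m(u/\beta)\|\ge c_b$. By the chain rule, $\nabla\Phi(u)=(\alpha/\beta)\nabla\varphi_m(u/\beta)$, so $\|\nabla\Phi(u)\|\ge\alpha c_b/\beta$. Combined with $b\le\alpha c_b/(2\beta)$, this yields
\[
S_{\rm OS}(x)\ge\alpha c_b/\beta\ge 2b>b,
\]
a contradiction.

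\textbf{Case 2: $u_m>\beta/2$.} Since $x_m=0$, we have $\|x-u\|\ge|u_m|>\beta/2$, so $S_{\rm OS}(x)=2L\|x-u\|>L\beta\ge b$, again a contradiction.

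In both cases $S_{\rm OS}(x)>b$, which proves the claim. Note that the strictness in Case 1 comes from $2b>b$ with $b>0$, and in Case 2 from the strict inequality $u_m>\beta/2$.

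The only delicate point is the well-posedness of the envelope formula on $\mathbb R^m$: uniqueness of $x^+$ and the identity $\nabla\phi_{1/(2L)}(x)=2L(x-x^+(x))$. Both are already asserted in Definition~\ref{def:optimization-stationarity} via $L$-weak convexity, and here they follow directly from the strong convexity of the proximal subproblem. Beyond that, the argument is a two-case split on $u_m$ against the threshold $\beta/2$, and no step appears to be a genuine obstacle.
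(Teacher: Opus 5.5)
Your proof is correct and is essentially the paper's argument. The paper assumes $S_{\rm OS}(x)\le b$, uses $x_m=0$ to get $|x^+_m|/\beta\le b/(2L\beta)\le 1/2$, and then applies Lemma~\ref{lem:common-chain}(iii) to obtain $S_{\rm OS}(x)\ge\alpha c_b/\beta\ge 2b$; your Case~2 is the contrapositive of that first step, so your dichotomy on $u_m$ versus $\beta/2$ is the same proof arranged differently.
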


\begin{proof}
	Fix $x$ with $x_m=0$, and let $x^+$ be the unique minimizer of
	$u\mapsto\Phi(u)+L\|u-x\|^2$.
	The unconstrained first-order condition gives
	\begin{equation*}
		\nabla\phi_{1/(2L)}(x)
		=2L(x-x^+)
		=\nabla\Phi(x^+)
		=\frac{\alpha}{\beta}
		\nabla\varphi_m(x^+/\beta).
	\end{equation*}
	Suppose that $S_{\rm OS}(x)\le b$. Then
	\begin{equation*}
		\frac{|x_m^+|}{\beta}
		\le\frac{\|x^+-x\|}{\beta}
		=\frac{S_{\rm OS}(x)}{2L\beta}
		\le\frac{b}{2L\beta}
		\le\frac12.
	\end{equation*}
	Lemma~\ref{lem:common-chain}(iii) therefore yields
	$S_{\rm OS}(x)\ge\alpha c_b/\beta\ge2b>b$,
	a contradiction.
\end{proof}

\subsection{Stochastic coordinate revelation}

The remaining ingredient converts a required number of successful
oracle rounds into a lower bound on stationarity.
The coordinate-revelation premise in the following lemma is verified
separately for each hard instance.

\begin{lemma}[Bernoulli progress bound]
	\label{lem:bernoulli-progress}
	Let $M\ge1$ be an integer and $p\in[0,1]$.
	At each oracle round $t$, draw a fresh
	$\omega_t\sim\operatorname{Bernoulli}(p)$, independently of the past,
	and share it across all queries in that round.
	Call a round successful if $\omega_t=1$.
	
	Suppose that, almost surely, revealing the last primal coordinate
	requires at least $M$ successful rounds.
	For any zero-respecting algorithm using at most $N$ rounds, its output
	$\widehat x$ satisfies:
	\begin{enumerate}[label=(\roman*),leftmargin=*]
		\item The last coordinate remains zero with probability
		\begin{equation}
			\label{eq:bernoulli-progress}
			\Pr(\widehat x_m=0)\ge1-\frac{Np}{M}.
		\end{equation}
		
		\item If $S_{\rm OS}(x)\ge b>0$ for every feasible $x$ with
		$x_m=0$, then, whenever $Np\le M$,
		\begin{equation*}
			\mathbb E[S_{\rm OS}(\widehat x)]
			\ge b\left(1-\frac{Np}{M}\right),
			\qquad
			\mathbb E[S_{\rm OS}(\widehat x)^2]
			\ge b^2\left(1-\frac{Np}{M}\right).
		\end{equation*}
	\end{enumerate}
\end{lemma}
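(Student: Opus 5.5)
The argument is a counting step followed by Markov's inequality. Let $N_{\rm succ}:=\sum_{t=1}^{N}\omega_t$ be the number of successful rounds among the at most $N$ rounds used; if the algorithm stops early at a random time, pad with dummy rounds, which can only increase $N_{\rm succ}$. The Bernoulli variables are drawn fresh and independently of the past, and each round's batch is chosen before $\omega_t$ is revealed. Linearity of expectation therefore gives $\mathbb E[N_{\rm succ}]=\sum_{t\le N}\Pr(\omega_t=1)=Np$, with no independence between the algorithm's choices and future samples needed beyond the fact that $\omega_t$ is Bernoulli$(p)$ marginally. Note that the algorithm's adaptivity cannot bias $\omega_t$: $\omega_t$ is independent of $\mathcal F_{t-1}$ and of the round-$t$ batch, so $\mathbb E[\omega_t\mid\mathcal F_{t-1}]=p$. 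A random stopping time $\tau\le N$ is handled by optional stopping, or simply by the bound $\sum_{t\le\tau}\omega_t\le\sum_{t\le N}\omega_t$.

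For part (i), the premise says that almost surely the last primal coordinate $m$ belongs to $\mathcal S_N^x$ only if $N_{\rm succ}\ge M$. Zero-respecting output gives $\operatorname{supp}(\widehat x)\subseteq\mathcal S_N^x$. Hence $\{\widehat x_m\neq0\}\subseteq\{m\in\mathcal S_N^x\}\subseteq\{N_{\rm succ}\ge M\}$. Markov's inequality then gives
\begin{equation*}
\Pr(\widehat x_m\neq0)\le\Pr(N_{\rm succ}\ge M)\le\frac{\mathbb E[N_{\rm succ}]}{M}=\frac{Np}{M},
\end{equation*}
which is \eqref{eq:bernoulli-progress}. The case $p=0$ is trivial.

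For part (ii), since $\widehat x$ is feasible, on the event $\{\widehat x_m=0\}$ we have $S_{\rm OS}(\widehat x)\ge b$. Because $S_{\rm OS}\ge0$,
\begin{equation*}
\mathbb E[S_{\rm OS}(\widehat x)]\ge b\Pr(\widehat x_m=0)\ge b\left(1-\frac{Np}{M}\right).
\end{equation*}
The second-moment bound follows the same way with $b^2$. The condition $Np\le M$ only ensures that the right-hand side is nonnegative and meaningful.

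The only step requiring care is justifying $\mathbb E[N_{\rm succ}]=Np$ under adaptive algorithm behaviour, including internal randomness and data-dependent stopping. Beyond that the result is routine, and I would present it in a few lines. Measurability of the output and of the event $\{\widehat x_m=0\}$ I take as given from the joint measurability assumed in Definition~\ref{def:stochastic-oracle}.
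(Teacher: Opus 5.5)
Your proposal is correct and follows essentially the same route as the paper. Both pad the Bernoulli sequence to length $N$, use the revelation premise with the zero-respecting output restriction to get $\{\widehat x_m\ne0\}\subseteq\{\sum_{t\le N}\omega_t\ge M\}$, apply Markov's inequality, and then lower-bound $\mathbb E[S_{\rm OS}(\widehat x)^q]$ by restricting to the event $\{\widehat x_m=0\}$.
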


\begin{proof}
	If the algorithm stops before round $N$, extend the Bernoulli sequence
	with unused independent draws.
	By the revelation premise and the zero-respecting output restriction,
	$\widehat x_m\ne0$ implies
	$\sum_{t=1}^{N}\omega_t\ge M$.
	Markov's inequality gives
	\begin{equation*}
		\Pr(\widehat x_m\ne0)
		\le
		\Pr\left(\sum_{t=1}^{N}\omega_t\ge M\right)
		\le
		\frac{\mathbb E[\sum_{t=1}^{N}\omega_t]}{M}
		=\frac{Np}{M},
	\end{equation*}
	which proves (i).
	
	For $q\in\{1,2\}$, nonnegativity and the assumed stationarity barrier
	give
	\begin{equation*}
		\mathbb E[S_{\rm OS}(\widehat x)^q]
		\ge
		\mathbb E\!\left[
		S_{\rm OS}(\widehat x)^q
		\mathbf 1_{\{\widehat x_m=0\}}
		\right]
		\ge b^q\Pr(\widehat x_m=0)
		\ge b^q\left(1-\frac{Np}{M}\right).
	\end{equation*}
	This proves (ii).
\end{proof}

Consequently, each lower-bound construction has three tasks:
verify the prescribed objective and oracle conditions, establish a
stationarity barrier before the last primal coordinate is revealed,
and bound the number of successful rounds needed to reveal it.
The preceding lemmas separate these tasks from the construction-specific
primal--dual mechanisms.

\section{The stochastic NC--C lower bound}
\label{nccmss:sec:zr}

We construct an NC--C instance whose stochastic oracle delays primal
coordinate revelation even under same-sample batch access.
The construction preserves the value function of the common chain,
while a Bernoulli gate controls propagation within each dual block.
We first establish the stochastic lower bound and then combine it
with a deterministic NC--C lower bound.

\subsection{Objective and stochastic oracle}

Recall the common chain
$\varphi_m(u)=H_m(u)+\sum_{i=1}^{m}W_i(u)^2\mathcal K(u_i)$
from Section~\ref{sec:common-chain}.
Let $c_b$ be the constant in Lemma~\ref{lem:common-chain}(iii),
and let $C_*>0$ be the universal constant specified in
Appendix~\ref{app:ncc}.
Set
\begin{equation*}
	\zeta:=\frac{4}{D_Y},
	\qquad
	\beta:=\frac{24C_*\varepsilon}{c_bL},
	\qquad
	\alpha:=\frac{L\beta^2}{8C_*},
	\qquad
	m:=\left\lfloor\frac{\Delta}{10\alpha}\right\rfloor.
\end{equation*}
The parameter conditions below ensure that $m\ge1$.

Take $X:=\mathbb R^m$ and
$Y:=\{y\in\mathbb R^{2m}:\|y\|\le D_Y/2\}$.
Write $y=(y^1,\ldots,y^m)$ with $y^i\in\mathbb R^2$,
and define $u:=x/\beta$, $v:=\zeta y$, and
$T_i(u):=W_i(u)\gamma(u_i)$.
The mean objective is
\begin{equation}
	\label{eq:new-C-mean}
	F_{\rm C}(x,y)
	=
	\alpha\left[
	H_m(u)
	-\frac12\sum_{i=1}^{m}(v^i)^\top B_2v^i
	+\frac1{\sqrt2}\sum_{i=1}^{m}
	\bigl(W_i(u)v_1^i-T_i(u)v_2^i\bigr)
	\right],
	\ 
	B_2:=
	\begin{pmatrix}
		3/2&-1\\
		-1&3/2
	\end{pmatrix}.
\end{equation}

Let $\omega\sim\operatorname{Bernoulli}(p)$, where
$p:=1$ if $\sigma=0$, and
$p:=(2\alpha\zeta)^2/((2\alpha\zeta)^2+\sigma^2)$
otherwise.
Define the stochastic joint gradient
$G_{\rm C}(x,y;\omega):=(G_x(x,y;\omega),G_y(x,y;\omega))$
by
\begin{equation}
	\label{eq:new-C-gradient}
	\begin{aligned}
		G_x(x,y;\omega)
		&=
		\frac{\alpha}{\beta}\left[
		\nabla H_m(u)
		+\frac1{\sqrt2}\sum_{i=1}^{m}
		\bigl(v_1^i\nabla W_i(u)-v_2^i\nabla T_i(u)\bigr)
		\right],\\
		G_y(x,y;\omega)
		&=
		\alpha\zeta\,
		\bigl(g_1(u,v;\omega),\ldots,g_m(u,v;\omega)\bigr),
	\end{aligned}
\end{equation}
where
\begin{equation*}
	g_i(u,v;\omega):=
	\begin{pmatrix}
		-\frac32v_1^i+\frac{\omega}{p}v_2^i
		+\frac{W_i(u)}{\sqrt2}\\[2pt]
		-\frac32v_2^i+\frac{\omega}{p}v_1^i
		-\frac{T_i(u)}{\sqrt2}
	\end{pmatrix}.
\end{equation*}
The oracle returns the exact function value together with
$G_{\rm C}$, as in Definition~\ref{def:stochastic-oracle}.
The primal gradient is exact; randomness affects only the
off-diagonal dual coupling.
Each round uses a fresh Bernoulli sample shared by all queries
in that round.

\begin{lemma}[Admissibility and stationarity barrier]
	\label{lem:new-C-instance}
	There exists a universal constant $c_0>0$ such that the following
	holds for all $L,\Delta,D_Y>0$, $\bar L\ge L$,
	$0\le\sigma\le D_Y\bar L/4$, and
	$0<\varepsilon\le
	c_0\min\{\sqrt{L\Delta},LD_Y\}$:
	\begin{enumerate}[label=(\roman*),leftmargin=*]
		\item The objective $F_{\rm C}$ is jointly $L$-smooth and
		$\mu$-strongly concave in $y$, with
		$\mu=\alpha\zeta^2/2$.
		Its value function satisfies
		$\Phi_{\rm C}(x)=\alpha\varphi_m(x/\beta)$ and
		$\Phi_{\rm C}(0)-\inf_x\Phi_{\rm C}(x)\le\Delta$.
		In particular,
		$F_{\rm C}\in\mathcal F_{\rm NC\text{-}C}(L,\Delta,D_Y)$.
		
		\item The stochastic gradient is unbiased, has variance at
		most $\sigma^2$, and satisfies
		\begin{equation*}
			\mathbb E_\omega
			\|G_{\rm C}(z;\omega)-G_{\rm C}(z';\omega)\|^2
			\le\bar L^2\|z-z'\|^2,
			\qquad z,z'\in X\times Y,
		\end{equation*}
		with the same sample used at both points.
		
		\item If $x_m=0$, then $S_{\rm OS}(x)>3\varepsilon/2$.
	\end{enumerate}
\end{lemma}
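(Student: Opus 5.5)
The plan is to verify (i)--(iii) in order. The core of (i) is an explicit block-by-block maximization over $y$, which should reproduce $\alpha\varphi_m(x/\beta)$ exactly. Throughout, $C_*$ is treated as a universal constant that can be enlarged to absorb the (dimension-free) Lipschitz constants of $S,\gamma,J,W_i,T_i$ and their gradients, and $c_0$ is then chosen small relative to $C_*$ and $c_b$.

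\textbf{Step 1 (value function and concavity).} For fixed $u$, the objective decouples over blocks $v^i\in\mathbb R^2$ into $-\tfrac12(v^i)^\top B_2v^i+\langle b^i,v^i\rangle$ with $b^i:=(W_i,-T_i)/\sqrt2$. The matrix $B_2$ has eigenvalues $1/2$ and $5/2$, and $B_2^{-1}=\tfrac45\begin{pmatrix}3/2&1\\1&3/2\end{pmatrix}$. The unconstrained maximizer $v^{i,*}=B_2^{-1}b^i$ therefore gives the value $\tfrac12(b^i)^\top B_2^{-1}b^i$. Expanding with $T_i=W_i\gamma(u_i)$ yields
\[
\tfrac12(b^i)^\top B_2^{-1}b^i = W_i^2\bigl(\tfrac3{10}(1+\gamma^2)-\tfrac25\gamma\bigr)=W_i^2\mathcal K(u_i).
\]
It remains to check that the joint maximizer lies in $Y$, i.e.\ $\|v^*\|\le\zeta D_Y/2=2$. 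Since $W_i\ge0$ telescopes, $\sum_iW_i=1-P_m\le1$, so $\sum_iW_i^2\le1$. Together with $|\gamma|\le1/2$ this gives $\|b\|^2\le5/8$. Hence $\|v^*\|\le\|B_2^{-1}\|\,\|b\|\le2\sqrt{5/8}<2$, so the constraint is inactive and $\Phi_{\rm C}(x)=\alpha\varphi_m(x/\beta)$. The $y$-Hessian is $-\alpha\zeta^2B_2\preceq-(\alpha\zeta^2/2)I$, which gives $\mu=\alpha\zeta^2/2$ and, in particular, concavity.

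\textbf{Step 2 (remaining parts of (i)).} For joint smoothness, I bound the three Hessian-type blocks separately.
\begin{itemize}
\item The $xx$-block has size $\alpha/\beta^2=L/(8C_*)$, using the uniform bounds on $\nabla^2H_m$, $\nabla^2W_i$, $\nabla^2T_i$ and $\|v\|\le2$.
\item The $xy$-block has size $\alpha\zeta/\beta\asymp L\beta/D_Y\asymp\varepsilon/D_Y\lesssim c_0L$.
\item The $yy$-block has size $\alpha\zeta^2\asymp L\beta^2/D_Y^2\lesssim c_0^2L$.
\end{itemize}
With $C_*$ large and $c_0$ small, I aim for the sharper statement that $F_{\rm C}$ is $(\sqrt3/2)L$-smooth; this extra room is needed in Step 3.

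One technical point is that the dimension-free bounds require the cross sums $\sum_iv_1^i\nabla W_i$ to be controlled. I would reuse the argument of Lemma~\ref{lem:common-chain}(i): only consecutive product terms interact. This is the step I expect to be the main obstacle, since it requires redoing the chain-smoothness bookkeeping with $v$-weights in place of $W_i\sqrt{\mathcal K}$.

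For the gap, Lemma~\ref{lem:common-chain}(ii) gives $\Phi_{\rm C}(0)-\inf\Phi_{\rm C}\le5m\alpha\le\Delta/2$. The bound $m\ge1$ requires $\alpha\le\Delta/10$, which is equivalent to $\varepsilon^2\lesssim L\Delta$ and is guaranteed by the $c_0$ condition.

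\textbf{Step 3 (oracle, part (ii)).} Unbiasedness follows from $\mathbb E[\omega/p]=1$. The noise is $(\omega/p-1)\,\alpha\zeta\,\mathrm{swap}(v)$, where $\mathrm{swap}(v)$ exchanges the two coordinates of each block. Since $\operatorname{Var}(\omega/p)=(1-p)/p=\sigma^2/(2\alpha\zeta)^2$ and $\|v\|\le2$, the variance is at most $4\alpha^2\zeta^2\cdot\sigma^2/(4\alpha^2\zeta^2)=\sigma^2$.

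For mean-squared smoothness, I write $G_{\rm C}(z;\omega)-G_{\rm C}(z';\omega)$ as the mean difference plus a zero-mean noise term. The cross term vanishes in expectation, so
\[
\mathbb E\|G_{\rm C}(z;\omega)-G_{\rm C}(z';\omega)\|^2\le\tfrac34L^2\|z-z'\|^2+\alpha^2\zeta^4\tfrac{1-p}{p}\|y-y'\|^2 .
\]
The last coefficient equals $\zeta^2\sigma^2/4=4\sigma^2/D_Y^2\le\bar L^2/4$, using $\sigma\le D_Y\bar L/4$. Combining with $L\le\bar L$ gives the total bound $\bar L^2$. The case $\sigma=0$ has $p=1$ and is trivial.

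\textbf{Step 4 (barrier, part (iii)).} I apply Lemma~\ref{nccmss:lem:moreau-transfer} with $b=3\varepsilon/2$, valid since $\Phi_{\rm C}$ is $L$-smooth as a value function of a strongly concave problem, by Danskin. Direct substitution gives $\alpha c_b/(2\beta)=L\beta c_b/(16C_*)=3\varepsilon/2$ and $L\beta=24C_*\varepsilon/c_b\ge3\varepsilon/2$ once $C_*\ge c_b/16$. The lemma then gives $S_{\rm OS}(x)>3\varepsilon/2$ whenever $x_m=0$.
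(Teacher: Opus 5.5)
Your Steps 1--3 follow the paper's proof closely. The blockwise maximization with $B_2^{-1}=\frac15\left(\begin{smallmatrix}6&4\\4&6\end{smallmatrix}\right)$ and $\|v^*\|^2\le5/2<4$, the eigenvalue bound giving $\mu=\alpha\zeta^2/2$, and the decomposition $G_{\rm C}=\nabla F_{\rm C}+(\omega/p-1)(0,\alpha\zeta\mathsf Rv)$ with the vanishing cross term are the paper's argument. Bounding the three blocks separately and using $\varepsilon\le c_0LD_Y$ to make the $xy$- and $yy$-blocks small is a harmless variant of the paper's bound $\operatorname{Lip}(\nabla F_{\rm C})\le\alpha C_*\max\{\beta^{-2},\zeta^2\}\le L/8$ (via $\beta\zeta\le1$). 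Your MSS arithmetic $\tfrac34L^2+4\sigma^2/D_Y^2\le\bar L^2$ is correct.

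The step that does not work as written is the justification in Step 4 that $\Phi_{\rm C}$ is $L$-smooth. Being the value function of a strongly concave problem does not give this. The standard Danskin-based estimate is $\ell(1+\ell/\mu)$, where $\ell$ is the joint smoothness constant. Here $\mu=L\beta^2/(C_*D_Y^2)$, so $\ell/\mu\asymp L^2D_Y^2/\varepsilon^2$, which is at least of order $c_0^{-2}$ and unbounded as $\varepsilon\to0$; the instance is deliberately ill-conditioned in $y$. The clean fix is the paper's. By the identity $\Phi_{\rm C}=\alpha\varphi_m(\cdot/\beta)$ from your Step 1 and Lemma~\ref{lem:common-chain}(i), $\operatorname{Lip}(\nabla\Phi_{\rm C})\le\alpha C_\varphi/\beta^2=LC_\varphi/(8C_*)\le L/8$ once $C_*\ge C_\varphi$. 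The paper builds this into the definition of $C_*$, and your convention of enlarging $C_*$ allows it. A Danskin argument could be rescued only through the block constants, $L_{xx}+L_{xy}L_{yx}/\mu$ with $L_{xy}=L_{yx}\asymp\alpha\zeta/\beta$, where the factors of $\zeta$ cancel; you did not make that argument.

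A smaller correction concerns what you call the main obstacle in Step 2. Dimension-free control of $\sum_iv_1^i\nabla W_i$ and of its Lipschitz constant does not come from only consecutive product terms interacting; the Hessian of $f_a:=\sum_ia_iW_i$ is generally dense. The mechanism in Appendix~\ref{app:common} is the estimate $|\partial^2_{rs}f_a|\le8q^2\|a\|_\infty\sqrt{W_rW_s}$ for $r<s$ (with $q=2\pi/3$), combined with Cauchy--Schwarz and $\sum_rW_r\le1$. Moreover, \eqref{eq:weighted-first-bound} and \eqref{eq:weighted-second-bound} are already proved there for an arbitrary weight vector $a$, with constants depending only on $\|a\|_\infty$, so nothing needs to be redone. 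Take $a=(v_1^i)_i$ with $\|a\|_\infty\le\|v\|\le2$, and obtain the bounds for $T_i=W_i\gamma(u_i)$ by the product rule, as the paper does in \eqref{eq:new-C-T-first-bound}--\eqref{eq:new-C-T-second-bound}.
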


The proof is given in Appendix~\ref{app:ncc:construction}.
Although the constructed objective is strongly concave in its dual
variable, its strong-concavity parameter depends on the target
accuracy. It is therefore an admissible hard instance for the NC--C
class, which imposes no fixed positive strong-concavity parameter.

\subsection{Coordinate revelation}

The next lemma supplies the propagation property needed to apply
Lemma~\ref{lem:bernoulli-progress}.

\begin{lemma}[A successful-round barrier]
	\label{lem:ncc-coordinate-progress}
	For the oracle in \eqref{eq:new-C-gradient}, any zero-respecting
	algorithm must observe at least $m$ successful rounds before its
	output can have a nonzero last primal coordinate.
	This statement holds for arbitrary finite batch sizes, provided that
	all queries in a round are chosen before that round's responses.
\end{lemma}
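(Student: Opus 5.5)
The proof is an induction on rounds with an explicit invariant on the revealed set $\mathcal S_t$. Index the joint coordinates as $x_i$ (primal) and $y^i_1,y^i_2$ (dual, block $i$). Let $k_t:=\sum_{s\le t}\omega_s$ be the number of successful rounds up to round $t$, and set
\[
\mathcal R(k):=\{x_1,\ldots,x_{\min(k,m)}\}\cup\bigcup_{i\le \min(k,m)}\{y^i_1,y^i_2\}\cup\{y^{k+1}_1\},
\]
with the last set omitted when $k\ge m$. I will prove by induction on $t$ that, almost surely, $\mathcal S_t\subseteq\mathcal R(k_t)$. Since $x_m\in\mathcal R(k)$ only when $k\ge m$, the output restriction $\operatorname{supp}(\widehat x^{\,t})\subseteq\mathcal S^x_t$ then gives the claim.

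\textbf{Support computations.} Fix a query $z=(x,y)$ with $\operatorname{supp}(x)\subseteq[r]$ and $\operatorname{supp}(y)$ contained in the dual blocks $1,\ldots,r$ together with $y^{r+1}_1$. I will check the following facts directly from \eqref{eq:new-C-gradient}.

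\begin{enumerate}[label=(\alph*),leftmargin=*]
\item $\nabla H_m(u)$ is supported in $[r]$. This holds because $S'(0)=0$ and $J'(0)=0$, and $H_m$ is separable.
\item For $i\ge r+2$, $W_i(u)$, $T_i(u)$, $\nabla W_i(u)$ and $\nabla T_i(u)$ all vanish. Every term of $W_i$ and of each partial derivative contains either the factor $S(u_{r+1})=S(0)=0$ or the factor $S'(u_{r+1})=S'(0)=0$.
\item For $i=r+1$, $W_{r+1}=P_r(u)$ may be nonzero, but $T_{r+1}=W_{r+1}\gamma(0)=0$. Moreover $\nabla W_{r+1}$ is supported in $[r]$, since $\partial_{u_{r+1}}W_{r+1}=-P_rS'(0)=0$. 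Finally $\nabla T_{r+1}=\gamma(0)\nabla W_{r+1}+W_{r+1}\gamma'(0)e_{r+1}=\tfrac14W_{r+1}e_{r+1}$.
\end{enumerate}

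\textbf{Consequences for one round.}
\begin{itemize}
\item Primal part. $G_x$ is supported in $[r]$ except for the term $-\tfrac{1}{\sqrt2}v^{r+1}_2\nabla T_{r+1}$. That term vanishes because $v^{r+1}_2=0$. So no new primal coordinate appears.
\item Blocks $i\le r$. Only coordinates already inside $\mathcal R$ can appear.
\item Block $r+1$. The first component, $-\tfrac32v^{r+1}_1+W_{r+1}/\sqrt2$, may reveal $y^{r+1}_1$; this coordinate is already allowed in $\mathcal R(r)$. The second component equals $\tfrac{\omega}{p}v^{r+1}_1$, since $v^{r+1}_2=0$ and $T_{r+1}=0$. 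It is nonzero only if $\omega=1$.
\item Blocks $i\ge r+2$. The components are $-\tfrac32v^i+\tfrac{\omega}{p}(\cdots)$ plus the $W_i,T_i$ terms, and all of these vanish.
\end{itemize}

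Hence in a round with $\omega_t=0$, every query response is supported in $\mathcal R(r)$. In a round with $\omega_t=1$, the response can add only $y^{r+1}_2$, so it stays inside $\mathcal R(r+1)$. Note that $x_{r+1}$ does \emph{not} appear in that round, because the query itself still had $v^{r+1}_2=0$. It can appear only at a later round, after $y^{r+1}_2$ has entered $\mathcal S$. That later query then lies in the configuration covered by $\mathcal R(r+1)$, and its responses can add $x_{r+1}$ and $y^{r+2}_1$.

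\textbf{Making the induction close.} I will state the invariant as $\mathcal S_t\subseteq\mathcal R(k_t)$ and run the support computation above for every query in round $t$. This is valid because every query in round $t$ satisfies $\operatorname{supp}(z^{t,k})\subseteq\mathcal S_{t-1}\subseteq\mathcal R(k_{t-1})$. The nonadaptive batch assumption is used exactly here. Without it, a query in the same round could exploit $y^{r+1}_2$ after it is revealed and obtain $x_{r+1}$ together with $y^{r+2}_1$, and a further query could then use the same $\omega_t=1$ again.

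There is one subtlety. Inside $\mathcal R(k)$ the block $k+1$ is only partially revealed, whereas $x_{k+1}$ can be revealed only after $y^{k+1}_2$. For this reason the invariant must be stated exactly with the asymmetric set $\mathcal R(k)$. The \textbf{main obstacle} I expect is the bookkeeping in fact (b) above: one must verify that the mixed partial derivatives of products $P_{i-1}(1-S(u_i))$ and $W_i\gamma(u_i)$ never leak a coordinate beyond $r+1$. I would handle this by writing $\partial_{u_j}P_{i-1}=S'(u_j)\prod_{l\ne j}S(u_l)$ and observing that each such product contains either $S(u_{r+1})$ or $S'(u_{r+1})$, and both vanish.
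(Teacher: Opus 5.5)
Your proposal is correct and follows essentially the same argument as the paper. You use the identical asymmetric invariant $\{x_i,y_2^i: i\le R_t\}\cup\{y_1^i: i\le R_t+1\}$, proved by induction from $S(0)=S'(0)=0$, $\gamma(0)=0$, and the vanishing coefficient $v_2^{r+1}$, and applied to every query of a nonadaptive same-sample batch at once. Your explicit formula $\nabla T_{r+1}=\tfrac14 W_{r+1}e_{r+1}$ only makes precise the paper's remark that the one primal term that could reach beyond $r$ is removed because its coefficient $v_2^{r+1}$ is zero.
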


\begin{proof}
	Let $R_t:=\sum_{s=1}^{t}\omega_s$ be the number of successful rounds
	among the first $t$ rounds.
	We prove the following support invariant: after round $t$, the
	revealed coordinates are contained in
	\begin{equation*}
		\{x_i:i\le R_t\}
		\ \cup\
		\{y_2^i:i\le R_t\}
		\ \cup\
		\{y_1^i:i\le R_t+1\},
	\end{equation*}
	with indices truncated at $m$.
	It holds initially because no coordinates have been revealed.
	
	Suppose it holds after round $t-1$, and write $r:=R_{t-1}<m$.
	Every query in round $t$ then satisfies
	$u_i=0$ for $i>r$,
	$v_2^i=0$ for $i>r$, and
	$v_1^i=0$ for $i>r+1$.
	At such a query,
	$W_i(u)=T_i(u)=0$ for $i>r+1$, while
	$T_{r+1}(u)=0$ because $\gamma(0)=0$.

	Moreover,
	$\operatorname{supp}(\nabla W_{r+1}(u))\subseteq[r]$:
	the function depends only on the first $r+1$ coordinates, and its
	derivative with respect to $u_{r+1}$ vanishes because
	$S'(u_{r+1})=S'(0)=0$.
	For $i\le r$, both $W_i$ and $T_i$ depend only on the first $i$
	coordinates.
	Consequently, the primal response cannot reveal any coordinate
	$x_i$ with $i>r$: the only potentially active block beyond $r$
	has coefficient $v_2^{r+1}=0$, and its $W_{r+1}$ term has no
	derivative beyond $r$.
	
	For the dual response, the first component can reveal at most
	$y_1^{r+1}$.
	The second component in block $r+1$ equals
	$\omega_t v_1^{r+1}/p$, because
	$v_2^{r+1}=T_{r+1}(u)=0$.
	Thus a failed round reveals no new second dual coordinate,
	whereas a successful round can reveal at most $y_2^{r+1}$.
	No later block can be revealed.
	
	These facts preserve the invariant when $\omega_t=0$ and imply it
	with $R_t=r+1$ when $\omega_t=1$.
	The case $r\ge m$ is immediate.
	The argument applies simultaneously to every query in the batch,
	because all queries use the same previously revealed support.
	
	By the zero-respecting output restriction, a nonzero last primal
	coordinate therefore requires $R_t\ge m$.
	Exact function values do not affect this conclusion, since they do
	not enlarge the revealed-coordinate set.
\end{proof}

\subsection{Complexity lower bound}

We first state the lower bound furnished directly by the construction.

\begin{proposition}[Lower bound for the constructed instance]
	\label{prop:ncc-stochastic-instance}
	Under the conditions of Lemma~\ref{lem:new-C-instance}, there is a
	universal constant $c_{\rm s}>0$ such that every zero-respecting
	algorithm using at most
	\begin{equation*}
		c_{\rm s}\left(
		\frac{L\Delta}{\varepsilon^2}
		+
		\frac{L^3D_Y^2\Delta\sigma^2}{\varepsilon^6}
		\right)
	\end{equation*}
	oracle rounds on the constructed instance satisfies
	$\mathbb E[S_{\rm OS}(\widehat x)]>\varepsilon$ and hence
	$\mathbb E[S_{\rm OS}(\widehat x)^2]>\varepsilon^2$.
\end{proposition}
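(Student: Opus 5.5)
The plan is to combine Lemma~\ref{lem:new-C-instance}(iii), Lemma~\ref{lem:ncc-coordinate-progress}, and Lemma~\ref{lem:bernoulli-progress}(ii) with $M=m$ and $b=3\varepsilon/2$.

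By Lemma~\ref{lem:new-C-instance}(iii), every $x$ with $x_m=0$ satisfies $S_{\rm OS}(x)>3\varepsilon/2$. By Lemma~\ref{lem:ncc-coordinate-progress}, the revelation premise holds with $M=m$ successful rounds, and the Bernoulli gate is shared across each batch. Lemma~\ref{lem:bernoulli-progress}(ii) then gives, for $Np\le m$,
\[
\mathbb E[S_{\rm OS}(\widehat x)]\ge \tfrac32\varepsilon\bigl(1-Np/m\bigr).
\]
This exceeds $\varepsilon$ whenever $Np/m<1/3$. So it suffices to show that the stated budget forces $N<m/(3p)$. The squared statement then follows by Jensen's inequality, or directly from part (ii) with $q=2$, since $\tfrac94(1-Np/m)>1$.

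The remaining work is computing $m/p$. We have $\beta=24C_*\varepsilon/(c_bL)$, so $\alpha=L\beta^2/(8C_*)\asymp\varepsilon^2/L$. Then $m=\lfloor\Delta/(10\alpha)\rfloor\ge\Delta/(20\alpha)\asymp L\Delta/\varepsilon^2$. The inequality $m\ge1$ holds by the condition $\varepsilon\le c_0\sqrt{L\Delta}$ with $c_0$ small, which justifies dropping the floor.

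For $p$, if $\sigma=0$ then $p=1$. Otherwise
\[
1/p=1+\sigma^2/(2\alpha\zeta)^2,
\]
and $2\alpha\zeta=8\alpha/D_Y\asymp\varepsilon^2/(LD_Y)$. Hence
\[
\frac1p\asymp 1+\frac{L^2D_Y^2\sigma^2}{\varepsilon^4},
\qquad
\frac mp\gtrsim \frac{L\Delta}{\varepsilon^2}+\frac{L^3D_Y^2\Delta\sigma^2}{\varepsilon^6},
\]
with universal constants depending on $C_*$ and $c_b$. Choosing $c_{\rm s}$ below one third of the implied constant gives $N\le c_{\rm s}(\cdots)<m/(3p)$. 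This also ensures $Np\le m$, as part (ii) requires.

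The main obstacle is bookkeeping: checking the constant tracking in $\alpha$, $\zeta$, and the floor, so that $m/p$ is bounded below by a universal multiple of the sum of the two terms uniformly in the parameter regime. This uses $\max(a,b)\ge(a+b)/2$. I would write $m/p\ge \frac{\Delta}{20\alpha}\bigl(1+\sigma^2/(2\alpha\zeta)^2\bigr)$ explicitly and substitute the definitions.
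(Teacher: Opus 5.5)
Your proposal is correct and follows essentially the same route as the paper. Both arguments combine Lemma~\ref{lem:ncc-coordinate-progress}, Lemma~\ref{lem:bernoulli-progress}, and the barrier $S_{\rm OS}>3\varepsilon/2$, then lower-bound $m/p$ using $\alpha=72C_*\varepsilon^2/(c_b^2L)$, $m\ge\Delta/(20\alpha)$, and $1/p=1+D_Y^2\sigma^2/(64\alpha^2)$. The only cosmetic difference is that the paper uses the threshold $N\le m/(4p)$, which gives $\Pr(\widehat x_m=0)\ge3/4$ and $\mathbb E[S_{\rm OS}(\widehat x)]>9\varepsilon/8$, whereas you use $N<m/(3p)$; either works.
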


\begin{proof}
	By Lemma~\ref{lem:ncc-coordinate-progress}, revealing the last
	primal coordinate requires at least $m$ successful rounds.
	Lemma~\ref{lem:bernoulli-progress} therefore gives
	$\Pr(\widehat x_m=0)\ge1-Np/m\ge3/4$
	whenever $N\le m/(4p)$.
	Together with Lemma~\ref{lem:new-C-instance}(iii), this yields
	\begin{equation*}
		\mathbb E[S_{\rm OS}(\widehat x)]
		>
		\frac{3\varepsilon}{2}\Pr(\widehat x_m=0)
		\ge\frac{9\varepsilon}{8}
		>\varepsilon.
	\end{equation*}
	The second-moment conclusion follows from Jensen's inequality.
	
	It remains to bound $m/(4p)$ from below.
	The parameter choices give
	$\alpha=72C_*\varepsilon^2/(c_b^2L)$ and
	$1/p=1+D_Y^2\sigma^2/(64\alpha^2)$.
	Choose $c_0$ sufficiently small that
	$\Delta/(10\alpha)\ge2$.
	Then $m\ge\Delta/(20\alpha)$, and
	\begin{equation*}
		\frac{m}{4p}
		\ge
		\frac{\Delta}{80\alpha}
		\left(1+\frac{D_Y^2\sigma^2}{64\alpha^2}\right)
		\ge
		c_{\rm s}\left(
		\frac{L\Delta}{\varepsilon^2}
		+
		\frac{L^3D_Y^2\Delta\sigma^2}{\varepsilon^6}
		\right)
	\end{equation*}
	for a universal constant $c_{\rm s}>0$.
\end{proof}

To obtain the additional deterministic term, we use a deterministic
NC--C lower bound of order
$L^2D_Y\Delta\varepsilon^{-3}$.
The required form of this external result is the following:
for the zero-respecting randomized batch-access algorithms defined
in Section~\ref{common:models}, fewer than
$c_{\rm d}L^2D_Y\Delta\varepsilon^{-3}$ rounds fail to attain
$\mathbb E[S_{\rm OS}(\widehat x)]\le\varepsilon$
on some admissible deterministic instance, where $c_{\rm d}>0$
is universal.


The deterministic construction of \citet{Zhang2026} is used here
in this form, subject to the access-model verification described
above. Its primal domain is unconstrained, and its dual domain
contains the origin and has diameter at most $D_Y$; hence its dual
radius is at most $D_Y$.
Equipping this instance with its exact gradient oracle gives zero
variance and $\bar L$-mean-square smoothness whenever $\bar L\ge L$.

\begin{theorem}[Stochastic NC--C lower bound]
	\label{nccmss:thm:zr}
	Assume that the deterministic lower bound stated above applies to
	the algorithm class in
	Definition~\ref{def:zero-respecting-algorithm}.
	There exist universal constants $c,c_0>0$ such that the following
	holds for all parameters satisfying
	\[
	L,\Delta,D_Y>0,\qquad
	\bar L\ge L,\qquad
	0\le\sigma\le\frac{D_Y\bar L}{4},\qquad
	0<\varepsilon\le
	c_0\min\{\sqrt{L\Delta},LD_Y\}.
	\]
	For every zero-respecting randomized algorithm with an oracle-round
	budget of at most
	\[
	c\left(
	\frac{L^2D_Y\Delta}{\varepsilon^3}
	+
	\frac{L^3D_Y^2\Delta\sigma^2}{\varepsilon^6}
	\right),
	\]
	there exist an objective
	$F\in\mathcal F_{\rm NC\text{-}C}(L,\Delta,D_Y)$
	and an unbiased stochastic first-order oracle for $F$ with variance
	at most $\sigma^2$ and mean-square-smoothness constant at most
	$\bar L$ such that the algorithm's output $\widehat x$ satisfies
	\[
	\mathbb E[S_{\rm OS}(\widehat x)]>\varepsilon.
	\]
	The expectation is over both algorithmic and oracle randomness.
	Consequently,
	$\mathbb E[S_{\rm OS}(\widehat x)^2]>\varepsilon^2$.
\end{theorem}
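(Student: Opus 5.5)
The theorem combines two lower bounds through the elementary inequality $a+b\le 2\max\{a,b\}$. Set $A:=L^2D_Y\Delta\varepsilon^{-3}$ (the deterministic term) and $B:=L^3D_Y^2\Delta\sigma^2\varepsilon^{-6}$ (the stochastic term). I will choose $c:=\tfrac12\min\{c_{\rm d},c_{\rm s}\}$, where $c_{\rm s}$ is the constant of Proposition~\ref{prop:ncc-stochastic-instance} and $c_{\rm d}$ is the constant of the assumed deterministic lower bound. I will take $c_0$ to be the minimum of the constant from Lemma~\ref{lem:new-C-instance} and whatever small constant the deterministic result requires for its accuracy regime. I am assuming that regime is no larger than $\varepsilon\lesssim\min\{\sqrt{L\Delta},LD_Y\}$; if it is, $c_0$ is shrunk further.

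Fix a zero-respecting algorithm whose round budget satisfies $N\le c(A+B)\le 2c\max\{A,B\}$. There are two cases.

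\emph{Case $B\ge A$.} Then $N\le 2cB\le c_{\rm s}B\le c_{\rm s}(L\Delta\varepsilon^{-2}+B)$. The parameters satisfy the hypotheses of Lemma~\ref{lem:new-C-instance}, so Proposition~\ref{prop:ncc-stochastic-instance} applies to $F_{\rm C}$ with the Bernoulli-gated oracle \eqref{eq:new-C-gradient}. It gives $\mathbb E[S_{\rm OS}(\widehat x)]>\varepsilon$. By Lemma~\ref{lem:new-C-instance}(i)--(ii), $F_{\rm C}$ lies in $\mathcal F_{\rm NC\text{-}C}(L,\Delta,D_Y)$ and its oracle is unbiased, with variance at most $\sigma^2$ and mean-square-smoothness constant at most $\bar L$. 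This is exactly the required objective--oracle pair.

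\emph{Case $A>B$.} Then $N\le 2cA\le c_{\rm d}A$. By the theorem's standing assumption, the deterministic lower bound of \citet{Zhang2026} holds for the batch-access class of Definition~\ref{def:zero-respecting-algorithm}, so it supplies an admissible instance on which $\mathbb E[S_{\rm OS}(\widehat x)]>\varepsilon$. That instance has $L$-Lipschitz joint gradient, gap at most $\Delta$, and a dual domain containing the origin with diameter at most $D_Y$, hence dual radius at most $D_Y$; so it belongs to $\mathcal F_{\rm NC\text{-}C}(L,\Delta,D_Y)$. I will equip it with its exact gradient oracle. This oracle is unbiased with zero variance, hence variance at most $\sigma^2$. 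It is mean-square smooth with constant $L\le\bar L$, because the sample gradient does not depend on $\xi$ and the joint gradient is $L$-Lipschitz. Its samples are trivially shared within a round, so batch access is consistent with the oracle model.

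In either case $\mathbb E[S_{\rm OS}(\widehat x)]>\varepsilon$. The second-moment statement then follows from Jensen's inequality, $\mathbb E[S_{\rm OS}^2]\ge(\mathbb E S_{\rm OS})^2>\varepsilon^2$.

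The main obstacle is not the case split itself but making sure the two instances use compatible parameter regimes and normalizations. Specifically, the deterministic construction must use the same stationarity measure: the Moreau envelope with parameter $1/(2L)$, applied to the unconstrained primal domain. It must also hold under the same constraints on $\varepsilon$. I would check that the Moreau-envelope normalization matches up to a constant factor, and absorb any discrepancy into $c_{\rm d}$ and $c_0$. Everything else reduces to the results already established.
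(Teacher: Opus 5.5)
Your proposal is correct and matches the paper's proof: the same constant $c=\tfrac12\min\{c_{\rm d},c_{\rm s}\}$, the same case split according to whether $A$ or $B$ dominates, the same admissibility check for the deterministic instance, and Jensen's inequality for the second moment. That admissibility check is that its exact oracle has zero variance and is mean-square smooth with constant $L\le\bar L$.
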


\begin{proof}
	Write
	$A:=L^2D_Y\Delta\varepsilon^{-3}$ and
	$B:=L^3D_Y^2\Delta\sigma^2\varepsilon^{-6}$.
	The deterministic lower bound supplies an admissible instance
	on which a budget of at most $c_{\rm d}A$ rounds fails.
	Proposition~\ref{prop:ncc-stochastic-instance} supplies an
	admissible stochastic instance on which a budget of at most
	$c_{\rm s}B$ rounds fails.
	Choose $c:=\frac12\min\{c_{\rm d},c_{\rm s}\}$.
	If $A\ge B$, then $c(A+B)\le c_{\rm d}A$, and the deterministic
	instance gives the conclusion.
	If $B>A$, then $c(A+B)\le c_{\rm s}B$, and the stochastic instance
	gives the conclusion.
	The second-moment statement again follows from Jensen's inequality.
\end{proof}

Since each oracle round contains at least one gradient evaluation,
the round lower bounds also imply lower bounds of the same order
on the number of individual gradient evaluations.

\section{Stochastic NC--SC lower bounds}
\label{sec:ncsc}

We establish stochastic lower bounds for NC--SC problems under
bounded-variance and mean-square-smooth oracle models.
Both constructions use an unconstrained primal domain and the
stationarity criterion of
Definition~\ref{def:optimization-stationarity}.

For $L,\mu,\Delta>0$, let
$\mathcal F_{\rm NC\text{-}SC}(L,\mu,\Delta)$ consist of continuously
differentiable objectives
$F:\mathbb R^m\times Y\to\mathbb R$, over arbitrary finite
dimensions, where $Y$ is a nonempty closed convex set containing
the origin. We require joint $L$-smoothness, the initial-gap bound
$\Phi(0)-\inf_x\Phi(x)\le\Delta$, and
\[
F(x,y')
\le
F(x,y)+\langle\nabla_yF(x,y),y'-y\rangle
-\frac{\mu}{2}\|y'-y\|^2,
\qquad x\in\mathbb R^m,\quad y,y'\in Y.
\]
Here $\Phi(x):=\max_{y\in Y}F(x,y)$ and $\kappa:=L/\mu$.
The maximum is attained uniquely: strong concavity gives a coercive
quadratic upper bound in the dual variable, and $Y$ is closed.
No uniform dual-radius bound is imposed on this class.
The BV construction uses a compact dual ball, whereas the MSS
construction uses $Y=\mathbb R^m$.

Throughout this section, one oracle round consists of a finite
batch of queries chosen before any response from that round.
All queries in a round share one fresh Bernoulli sample.
The oracle also returns exact function values, as specified in
Section~\ref{common:models}.

\paragraph{Deterministic baseline.}
The combined bounds below use a deterministic NC--SC lower bound
of order $L\Delta\sqrt{\kappa}\,\varepsilon^{-2}$.
Specifically, we require this bound for the randomized
zero-respecting algorithms of
Definition~\ref{def:zero-respecting-algorithm}, measured in oracle
rounds and with failure criterion
$\mathbb E[S_{\rm OS}(\widehat x)]>\varepsilon$.
We refer to this statement as the deterministic baseline.

The result of \citet{li} is a candidate for this baseline.
Its applicability to the present stationarity criterion and
batch-access model must be verified before invoking the combined
theorems below. The stochastic bounds proved directly from our
constructions do not depend on this external result.

\subsection{Bounded-variance oracles}
\label{sec:bv}

\paragraph{Construction.}
Let $G=(G_1,\ldots,G_m)$ be the feature map of the common chain,
so that $\varphi_m=H_m+\|G\|^2$.
Define
\[
F_{\rm BV}(x,y)
:=
\alpha H_m(x/\beta)
+\sqrt{2\mu\alpha}\,\langle G(x/\beta),y\rangle
-\frac{\mu}{2}\|y\|^2
\]
on $X:=\mathbb R^m$ and
$Y:=\{y\in\mathbb R^m:\|y\|\le2\sqrt{\alpha/\mu}\}$.
Set
\[
\beta:=\frac{16C_{\rm B}\varepsilon}{c_bL},
\qquad
\alpha:=\frac{L\beta^2}{C_{\rm B}},
\qquad
m:=\left\lfloor\frac{\Delta}{5\alpha}\right\rfloor,
\qquad
p:=\frac{C_{\rm V}\mu\alpha}{C_{\rm V}\mu\alpha+\sigma^2},
\]
where $C_{\rm B},C_{\rm V}>0$ are the universal constants specified
in Appendix~\ref{app:bv:construction}.

Let $\chi$ be the bounded $C^{1,1}$ clipping function defined in
Appendix~\ref{app:bv:construction}, and set
$\chi_\beta(t):=\beta\chi(t/\beta)$.
For vectors, $\chi_\beta$ is applied coordinatewise.
With $\omega\sim\operatorname{Bernoulli}(p)$, define
\[
G_{\rm BV}(x,y;\omega)
:=
\nabla F_{\rm BV}(x,y)
+
\left(\frac{\omega}{p}-1\right)\sqrt{2\mu\alpha}
\begin{pmatrix}
	\beta^{-1}\nabla G(x/\beta)^\top\chi_\beta(y)\\
	\bigl(G_i(x/\beta)\chi_\beta'(y_i)\bigr)_{i=1}^{m}
\end{pmatrix}.
\]
The clipping bounds the noise uniformly without changing its
coordinate-revelation behavior at zero.

\begin{lemma}[Properties of the BV construction]
	\label{lem:bv-class}
	\label{lem:bv-oracle}
	There exists a universal constant $c_0>0$ such that, for all
	$L,\mu,\Delta>0$, $\sigma\ge0$, $\kappa=L/\mu\ge2$, and
	$0<\varepsilon\le c_0\sqrt{L\Delta}$, the following hold:
	\begin{enumerate}[label=(\roman*),leftmargin=*]
		\item
		$F_{\rm BV}\in
		\mathcal F_{\rm NC\text{-}SC}(L,\mu,\Delta)$.
		
		\item
		The value function is
		\begin{equation}
			\label{eq:bv-value}
			\Phi_{\rm BV}(x)=\alpha\varphi_m(x/\beta).
		\end{equation}
		
		\item
		The stochastic gradient $G_{\rm BV}$ is unbiased and has
		variance at most $\sigma^2$.
		
		\item
		If a query is supported on the first $r$ coordinate pairs
		$(x_i,y_i)$, its response is supported on the first
		$\min\{r+1,m\}$ pairs. If $\omega=0$, no pair beyond $r$
		is revealed.
		
		Consequently, for every zero-respecting algorithm,
		$\widehat x_m=0$ whenever fewer than $m$ rounds are successful.
		This conclusion holds for arbitrary finite nonadaptive
		same-sample batches.
	\end{enumerate}
\end{lemma}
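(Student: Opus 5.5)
I will verify the four items directly from the construction, working in the scaled variables $u:=x/\beta$.

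\emph{Value function.} I start with (ii), because (i) relies on it. For fixed $x$, $F_{\rm BV}(x,\cdot)$ is $\mu$-strongly concave. Its unconstrained maximizer is $y^*(x)=\sqrt{2\alpha/\mu}\,G(u)$, with maximum value $\alpha H_m(u)+\alpha\|G(u)\|^2=\alpha\varphi_m(u)$. Since $\mathcal K\le 3/10\cdot 5/4+1/5<1$ and $W_i\in[0,1]$, I need the bound $\|G(u)\|\le\sqrt2$. Here $\sum_i W_i^2\le(\sum_i W_i)^2\le1$, because $\sum_i W_i$ telescopes to $1-P_m\le1$. This gives $\|G\|\le1$, so $\|y^*\|\le\sqrt{2\alpha/\mu}<2\sqrt{\alpha/\mu}$. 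Hence the constraint is inactive and \eqref{eq:bv-value} holds.

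\emph{Class membership.} For (i), strong concavity and the condition $0\in Y$ are immediate. The gap bound follows from Lemma~\ref{lem:common-chain}(ii): it gives $5m\alpha\le\Delta$, and $m\ge1$ follows from $\varepsilon\le c_0\sqrt{L\Delta}$, since $\alpha\asymp\varepsilon^2/L$. Joint $L$-smoothness is where I expect the main work. The Hessian has three pieces.
\begin{itemize}
\item The $xx$ block is $\beta^{-2}[\alpha\nabla^2H_m+\sqrt{2\mu\alpha}\sum_i y_i\nabla^2G_i]$. With $|y|\lesssim\sqrt{\alpha/\mu}$ this is $O(\alpha/\beta^2)=O(L/C_{\rm B})$, provided $G$ has Lipschitz Jacobian with bounded $\sum_i|y_i|\,\|\nabla^2G_i\|$ in operator norm. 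I will need to argue this via the chain structure: each $\nabla^2 G_i$ is supported on the first $i$ coordinates and decays, as in the proof of Lemma~\ref{lem:common-chain}.
\item The $xy$ block is $\sqrt{2\mu\alpha}\,\beta^{-1}\nabla G$. Its norm is $O(\sqrt{\mu\alpha}/\beta)=O(\sqrt{\mu L/C_{\rm B}})\le L/2$, using $\kappa\ge2$.
\item The $yy$ block is $\mu\le L/2$.
\end{itemize}
Choosing $C_{\rm B}$ large enough then closes the bound.

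\emph{Oracle.} For (iii), unbiasedness follows from $\mathbb E[\omega/p-1]=0$. The noise vector is bounded uniformly. Because $|\chi_\beta|\le C\beta$, its $x$-part is $O(\sqrt{\mu\alpha})$, using that $\nabla G$ has bounded operator norm. Because $|\chi'|\le C$ and $\|G\|\le1$, its $y$-part is $O(\sqrt{\mu\alpha})$. The variance is therefore at most $(1-p)/p\cdot C\mu\alpha=C\mu\alpha\,\sigma^2/(C_{\rm V}\mu\alpha)\le\sigma^2$ once $C_{\rm V}\ge C$.

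\emph{Coordinate revelation.} For (iv) I follow the proof of Lemma~\ref{lem:ncc-coordinate-progress}. Suppose a query is supported on pairs $1,\dots,r$. Then $G_i(u)=0$ for $i>r+1$, and the zero-chain structure gives $\operatorname{supp}\nabla G_i\subseteq[r]$ for $i\ge r+1$, since $S'(0)=0$. Therefore the $x$-gradient involves only $\nabla H_m$ together with the terms $y_i\nabla G_i$, and it is supported on $[r+1]$. The $y$-gradient is $\sqrt{2\mu\alpha}\,G(u)-\mu y$, supported on $[r+1]$.

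The key point is the case $\omega=0$. The full oracle response is then the mean gradient minus the noise term, and since $\chi_\beta(y_i)=y_i$ near zero, the subtraction cancels the $y$-coupling. The response becomes
\[
\Bigl(\tfrac{\alpha}{\beta}\nabla H_m(u)\;+\;\sqrt{2\mu\alpha}\,\beta^{-1}\nabla G^\top\bigl(y-\chi_\beta(y)\bigr),\;\; \sqrt{2\mu\alpha}\bigl(G_i(1-\chi'_\beta(y_i))\bigr)_i-\mu y\Bigr).
\]
I need $\chi(t)=t$ on a neighborhood of $0$, and I will verify this property of $\chi$ from the appendix. At coordinate $r+1$, $y_{r+1}=0$ gives $\chi'_\beta(0)=1$, which kills $G_{r+1}$. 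Also $\nabla H_m$ restricted to $[r]$-supported $u$ is supported on $[r]$, since $S'(0)=J'(0)=0$. So no new pair is revealed.

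The consequence then follows by induction on rounds, exactly as in Lemma~\ref{lem:ncc-coordinate-progress}. A round can advance the revealed support by at most one pair, and only when $\omega_t=1$. This holds uniformly over all queries in a batch, because they share the earlier support and the same $\omega_t$.

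I expect the uniform bounds on $\nabla G$ and the weighted $\nabla^2 G$ to be the main obstacle. The other steps are direct.
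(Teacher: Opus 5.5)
Your plan follows the paper's proof item by item: the inactive dual constraint for (ii), block estimates for (i), centered Bernoulli noise for (iii), and the identities $\chi_\beta(0)=0$, $\chi_\beta'(0)=1$ for (iv). However, two steps are not right as written.

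\emph{Norms in (i) and (iii).} The clipping controls only $\|\chi_\beta(y)\|_\infty\le5\beta/12$. To conclude that the primal noise block $\rho\beta^{-1}\nabla G(u)^\top\chi_\beta(y)$ is $O(\sqrt{\mu\alpha})$, you need the $\ell_\infty$-input bound $\|\nabla G(u)^\top v\|\le C_1\|v\|_\infty$ from \eqref{eq:G-operator-bounds}. A Euclidean operator-norm bound gives only $C_1\|\chi_\beta(y)\|\le C_1\min\{\sqrt m\,5\beta/12,\|y\|\}$. That yields either a factor $\sqrt m$ or $\|\mathcal A_{\rm BV}\|^2\lesssim\alpha^2/\beta^2=\kappa\mu\alpha/C_{\rm B}$, which loses exactly the factor $\kappa$ that Proposition~\ref{prop:bv-stochastic} relies on.

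The same problem affects the $xx$ block: you cannot work with $\sum_i|y_i|\,\|\nabla^2G_i\|_{\rm op}$, because the individual Hessians do not decay. For example, if $S(u_j)=1-1/m$ for all $j$, then $P_{i-1}\ge e^{-1}$ and $|\partial_{ii}G_i|\approx2q^2P_{i-1}\sqrt{\mathcal K(u_i)}$ for every $i$, so that sum is of order $\|y\|_1$. What you need is the collective bound $\|(\nabla G(u)-\nabla G(u'))^\top v\|\le C_2\|v\|_\infty\|u-u'\|$. It comes from the $\sqrt{W_rW_s}$ weighting in Step~2 of Appendix~\ref{app:common} and is already proved there. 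Citing it, together with $\|y\|_\infty\le\|y\|\le2\sqrt{\alpha/\mu}$, resolves what you call the main obstacle.

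\emph{Support in (iv).} Your claim $\operatorname{supp}\nabla G_i\subseteq[r]$ for $i\ge r+1$ is false at $i=r+1$. Since $G_{r+1}=W_{r+1}\sqrt{\mathcal K(u_{r+1})}$ and $\mathcal K'(0)=-1/10$, one has $\partial_{r+1}G_{r+1}(u)=P_r(u)\,\mathcal K'(0)/(2\sqrt{\mathcal K(0)})\ne0$ whenever $P_r(u)>0$. The fact $\operatorname{supp}\nabla W_{r+1}\subseteq[r]$ from the NC--C proof does not transfer to $G_{r+1}$.

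The primal response stays on $[r]$ for a different reason. For every $\omega$, the coefficient of $\nabla G_i(u)$ is $\rho\beta^{-1}(y_i+\eta\chi_\beta(y_i))$ with $\eta=\omega/p-1$. This vanishes for $i>r$ because $y_i=0$ and $\chi_\beta(0)=0$, while for $i\le r$ the map $G_i$ depends only on $u_1,\dots,u_i$. This argument also covers revealed coordinates with $|y_i|>\beta/4$, where $y_i-\chi_\beta(y_i)\ne0$ and your cancellation argument at $\omega=0$ does not apply.

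Finally, verify the full stochastic response at $\omega=1$, not just $\nabla F_{\rm BV}$. Its only possible new component is the dual entry $(\rho/p)\,G_{r+1}(u)$.
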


The proof is given in Appendix~\ref{app:bv:construction}.

\begin{proposition}[Lower bound for the BV construction]
	\label{prop:bv-stochastic}
	Under the conditions of Lemma~\ref{lem:bv-class}, there exists a
	universal constant $c_{\rm B}'>0$ such that every zero-respecting
	randomized algorithm using at most
	\[
	c_{\rm B}'\left(
	\frac{L\Delta}{\varepsilon^2}
	+
	\frac{L\Delta\kappa\sigma^2}{\varepsilon^4}
	\right)
	\]
	oracle rounds on the constructed instance satisfies
	$\mathbb E[S_{\rm OS}(\widehat x)]>\varepsilon$.
	Consequently,
	$\mathbb E[S_{\rm OS}(\widehat x)^2]>\varepsilon^2$.
\end{proposition}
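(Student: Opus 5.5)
The argument will follow the proof of Proposition~\ref{prop:ncc-stochastic-instance} closely. First, Lemma~\ref{lem:bv-class}(iv) states that $\widehat x_m=0$ whenever fewer than $m$ rounds are successful. This is exactly the revelation premise of Lemma~\ref{lem:bernoulli-progress} with $M=m$, and it holds for arbitrary same-sample batches. Applying that lemma with $N\le m/(4p)$ rounds gives $\Pr(\widehat x_m=0)\ge 3/4$.

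Second, I need a stationarity barrier on the hyperplane $\{x_m=0\}$. By Lemma~\ref{lem:bv-class}(ii), $\Phi_{\rm BV}(x)=\alpha\varphi_m(x/\beta)$ on $X=\mathbb R^m$. By Lemma~\ref{lem:common-chain}(i), $\Phi_{\rm BV}$ is $(\alpha C_\varphi/\beta^2)$-smooth, and I expect the appendix to fix $C_{\rm B}$ with $C_{\rm B}\ge C_\varphi$, so that this constant is at most $L$. This smoothness of $\Phi_{\rm BV}$ should also be part of (i), since $\kappa\ge2$ is imposed. Lemma~\ref{nccmss:lem:moreau-transfer} then applies with $b=\min\{L\beta,\alpha c_b/(2\beta)\}$. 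With $\beta=16C_{\rm B}\varepsilon/(c_bL)$ and $\alpha=L\beta^2/C_{\rm B}$:
\begin{equation*}
L\beta=\frac{16C_{\rm B}\varepsilon}{c_b}\ge 16\varepsilon
\quad\text{(if $C_{\rm B}\ge c_b$)},
\qquad
\frac{\alpha c_b}{2\beta}=\frac{L\beta c_b}{2C_{\rm B}}=8\varepsilon .
\end{equation*}
So $S_{\rm OS}(x)>8\varepsilon$ whenever $x_m=0$. Lemma~\ref{lem:bernoulli-progress}(ii) then gives $\mathbb E[S_{\rm OS}(\widehat x)]\ge 8\varepsilon\cdot\tfrac34>\varepsilon$, and the second-moment statement follows from Jensen's inequality.

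Third, I bound $m/(4p)$ from below. We have $\alpha=256C_{\rm B}\varepsilon^2/(c_b^2L)$, so $\alpha\asymp\varepsilon^2/L$. Choosing $c_0$ small makes $\Delta/(5\alpha)\ge2$, hence $m\ge\Delta/(10\alpha)\gtrsim L\Delta/\varepsilon^2$. Also
\begin{equation*}
\frac1p=1+\frac{\sigma^2}{C_{\rm V}\mu\alpha},
\qquad
\mu\alpha=\frac{L\alpha}{\kappa}\asymp\frac{\varepsilon^2}{\kappa},
\end{equation*}
so $1/p\asymp 1+\kappa\sigma^2/\varepsilon^2$. Multiplying the two estimates,
\begin{equation*}
\frac{m}{4p}\ \ge\ c_{\rm B}'\left(\frac{L\Delta}{\varepsilon^2}+\frac{L\Delta\kappa\sigma^2}{\varepsilon^4}\right)
\end{equation*}
for a universal $c_{\rm B}'$. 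Any budget below this quantity therefore satisfies $N\le m/(4p)$, which completes the argument.

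The main obstacle is the constant bookkeeping in the second step. The barrier requires $\Phi_{\rm BV}$ to be $L$-smooth, and this depends on how $C_{\rm B}$ is chosen in the appendix relative to $C_\varphi$ and $c_b$. If $C_{\rm B}<c_b$, I would not rely on $L\beta\ge16\varepsilon$. Instead I would take $b$ equal to whichever of the two terms in the minimum is smaller, which is still a constant multiple of $\varepsilon$ at least $8\varepsilon\min\{1,2C_{\rm B}/c_b\}$, and then, if that multiple falls below $4/3$, shrink the success probability target from $3/4$ accordingly. Either way, this changes only universal constants.
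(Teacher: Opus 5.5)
Your proposal is correct and matches the paper's proof step for step: the Moreau-envelope transfer with $b=8\varepsilon$, the Bernoulli progress bound via Lemma~\ref{lem:bv-class}(iv), and the lower bound on $m/p$. The only difference is that you use $N\le m/(4p)$ with $\Pr(\widehat x_m=0)\ge 3/4$ where the paper uses $N\le m/(2p)$ with $1/2$. Your contingency on the constants is unnecessary, because the appendix fixes $C_{\rm B}\ge\max\{1,2C_\varphi\}$ and $c_b=1/100$, so $\Phi_{\rm BV}$ is $L/2$-smooth and $L\beta\ge 8\varepsilon$ hold directly.
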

\begin{proof}
	Choose $c_0\le c_b/\sqrt{10240C_{\rm B}}$.
	Then $\alpha=256C_{\rm B}\varepsilon^2/(c_b^2L)\le\Delta/40$
	and $m\ge\Delta/(10\alpha)$.
	Since $C_{\rm B}\ge2C_\varphi$, the value function satisfies
	$\operatorname{Lip}(\nabla\Phi_{\rm BV})
	\le\alpha C_\varphi/\beta^2\le L/2$.
	Set $b:=8\varepsilon$.
	The parameter choices give $\alpha c_b/(2\beta)=b$ and
	$b/(L\beta)=c_b/(2C_{\rm B})\le1$.
	Lemma~\ref{nccmss:lem:moreau-transfer} therefore yields
	$S_{\rm OS}(x)>8\varepsilon$ whenever $x_m=0$.
	
	By Lemma~\ref{lem:bv-class}(iv), revealing $x_m$ requires at
	least $m$ successful rounds.
	For $N\le m/(2p)$, Lemma~\ref{lem:bernoulli-progress} gives
	$\Pr(\widehat x_m=0)\ge1/2$.
	Hence $\mathbb E[S_{\rm OS}(\widehat x)]
	>8\varepsilon\,\Pr(\widehat x_m=0)\ge4\varepsilon>\varepsilon$.
	The second-moment conclusion follows from Jensen's inequality.
	
	Finally, $p^{-1}=1+\sigma^2/(C_{\rm V}\mu\alpha)$ and
	$m\ge\Delta/(10\alpha)$ imply
	$m/(2p)\ge\Delta/(20\alpha)
	+\Delta\sigma^2/(20C_{\rm V}\mu\alpha^2)$.
	Substituting $\alpha=256C_{\rm B}\varepsilon^2/(c_b^2L)$
	and $\mu=L/\kappa$, we obtain
	$m/(2p)\ge c_{\rm B}'(L\Delta/\varepsilon^2
	+L\Delta\kappa\sigma^2/\varepsilon^4)$ for any
	$c_{\rm B}'>0$ no larger than
	$\min\{c_b^2/(5120C_{\rm B}),
	c_b^4/(20\cdot256^2C_{\rm V}C_{\rm B}^2)\}$.
	This proves the claim.
\end{proof}

\begin{theorem}[Combined BV lower bound]
	\label{thm:intro-bv}
	Assume that the deterministic baseline stated above holds for
	the algorithm class in
	Definition~\ref{def:zero-respecting-algorithm}.
	There exist universal constants $\kappa_0\ge2$ and $c,c_0>0$
	such that the following holds for all parameters satisfying
	\[
	L,\mu,\Delta>0,\qquad
	\kappa:=L/\mu\ge\kappa_0,\qquad
	\sigma\ge0,\qquad
	0<\varepsilon\le c_0\sqrt{L\Delta}.
	\]
	For every zero-respecting randomized algorithm using at most
	\begin{equation}
		\label{eq:bv-rate}
		c\left(
		\frac{L\Delta\sqrt{\kappa}}{\varepsilon^2}
		+
		\frac{L\Delta\kappa\sigma^2}{\varepsilon^4}
		\right)
	\end{equation}
	oracle rounds, there exist an objective
	$F\in\mathcal F_{\rm NC\text{-}SC}(L,\mu,\Delta)$
	and an unbiased stochastic first-order oracle for $F$ with
	variance at most $\sigma^2$ such that the algorithm's output
	$\widehat x$ satisfies
	$\mathbb E[S_{\rm OS}(\widehat x)]>\varepsilon.$
	Consequently,
	$\mathbb E[S_{\rm OS}(\widehat x)^2]>\varepsilon^2$.
	Both expectations are over algorithmic and oracle randomness.
\end{theorem}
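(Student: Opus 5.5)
The argument mirrors the proof of Theorem~\ref{nccmss:thm:zr}: it combines the deterministic baseline with Proposition~\ref{prop:bv-stochastic} by a case split on which term of \eqref{eq:bv-rate} dominates. Write
\[
A:=\frac{L\Delta\sqrt{\kappa}}{\varepsilon^2},
\qquad
B:=\frac{L\Delta\kappa\sigma^2}{\varepsilon^4}.
\]
By assumption, the deterministic baseline supplies a universal constant $c_{\rm d}>0$ and an admissible instance $F\in\mathcal F_{\rm NC\text{-}SC}(L,\mu,\Delta)$ on which every zero-respecting randomized algorithm with at most $c_{\rm d}A$ rounds fails, i.e.\ $\mathbb E[S_{\rm OS}(\widehat x)]>\varepsilon$. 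I will equip that instance with its exact gradient oracle. This oracle is unbiased with zero variance, so it is admissible for every $\sigma\ge0$.

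The baseline will generally carry its own restrictions, namely some $\kappa\ge\kappa_{\rm d}$ and $\varepsilon\le c_{\rm d}'\sqrt{L\Delta}$. I will take
\[
\kappa_0:=\max\{2,\kappa_{\rm d}\},
\qquad
c_0:=\min\{c_{\rm d}',\,c_0^{\rm BV}\},
\]
where $c_0^{\rm BV}$ is the constant from Lemma~\ref{lem:bv-class}. With these choices, both the baseline and Proposition~\ref{prop:bv-stochastic} apply throughout the stated parameter range; note in particular that $\kappa\ge2$ is what Lemma~\ref{lem:bv-class} needs.

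Proposition~\ref{prop:bv-stochastic} gives an instance (with oracle $G_{\rm BV}$ of variance at most $\sigma^2$) on which every algorithm with at most $c_{\rm B}'(L\Delta/\varepsilon^2+B)\ge c_{\rm B}'B$ rounds fails. I will set $c:=\tfrac12\min\{c_{\rm d},c_{\rm B}'\}$ and split into two cases.
\begin{itemize}
\item If $A\ge B$, then $c(A+B)\le 2cA\le c_{\rm d}A$, and the deterministic instance with its exact oracle witnesses failure.
\item If $B>A$, then $c(A+B)\le 2cB\le c_{\rm B}'B$, and the BV instance witnesses failure.
\end{itemize}
In either case $\mathbb E[S_{\rm OS}(\widehat x)]>\varepsilon$. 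Since $S_{\rm OS}\ge0$, Jensen's inequality then gives $\mathbb E[S_{\rm OS}(\widehat x)^2]\ge(\mathbb E[S_{\rm OS}(\widehat x)])^2>\varepsilon^2$.

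The only delicate step is reconciling the two sources, which amounts to bookkeeping rather than new analysis. The baseline is stated with the failure criterion $\mathbb E[S_{\rm OS}]>\varepsilon$ and for the batch-access zero-respecting class, so it must be used exactly in that form; the theorem's hypothesis is precisely what licenses this. Beyond that, I need to check that both constructions use an unconstrained primal domain, so that $S_{\rm OS}$ is computed in the same way for each, and that both dual domains contain the origin, so that the first zero-respecting query is feasible. I would verify these points explicitly when invoking the baseline.
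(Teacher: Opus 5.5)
Your proposal is correct and follows the paper's proof essentially line for line: the same $A$ and $B$, the exact-oracle admissibility of the deterministic baseline, $c:=\frac12\min\{c_{\rm d},c_{\rm B}'\}$, and the case split on $A\ge B$. Your explicit choice of $\kappa_0$ and $c_0$ fills in bookkeeping that the paper leaves implicit here. The domain checks you plan are automatic, since membership in $\mathcal F_{\rm NC\text{-}SC}(L,\mu,\Delta)$ already requires an unconstrained primal domain and a dual domain containing the origin.
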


\begin{proof}
	Let
	$A:=L\Delta\sqrt{\kappa}\,\varepsilon^{-2}$ and
	$B:=L\Delta\kappa\sigma^2\varepsilon^{-4}$.
	The deterministic baseline gives a lower bound $c_{\rm d}A$
	for a universal $c_{\rm d}>0$.
	Its exact gradient oracle has zero variance and is therefore
	admissible in the BV model.
	Proposition~\ref{prop:bv-stochastic} gives a lower bound
	$c_{\rm B}'B$ on the constructed stochastic instance.
	Set $c:=\frac12\min\{c_{\rm d},c_{\rm B}'\}$.
	If $A\ge B$, then $c(A+B)\le c_{\rm d}A$, and the
	deterministic instance gives the conclusion.
	Otherwise, $c(A+B)\le c_{\rm B}'B$, and the stochastic instance
	gives the conclusion.
\end{proof}

\subsection{Mean-square-smooth oracles}
\label{sec:mss}

\paragraph{Construction.}
Take $X=Y=\mathbb R^m$ and define
\[
F_{\rm MS}(x,y)
:=
\delta\lambda\varphi_m(y/\lambda)-\mu\|y-ax\|^2.
\]
Its stochastic gradient is
\[
G_{\rm MS}(x,y;\omega)
:=
\nabla F_{\rm MS}(x,y)
+
\left(\frac{\omega}{p}-1\right)
\begin{pmatrix}
	0\\
	\delta\nabla Q_m(y/\lambda)
\end{pmatrix},
\qquad
\omega\sim\operatorname{Bernoulli}(p),
\]
where
\begin{equation}
	\label{eq:mss-scales-en}
	\begin{aligned}
		a&:=\sqrt{\frac{\kappa}{8}},
		&
		\delta&:=\frac{4\varepsilon}{ac_b},
		&
		C_\lambda&:=\frac{2(C_\varphi+C_Q)}{V_0},\\
		\lambda&:=\frac{2C_\lambda\sigma}{\bar L},
		&
		p&:=\left(\frac{\delta V_0}{\sigma}\right)^2,
		&
		m&:=\left\lfloor\frac{\Delta}{10\delta\lambda}\right\rfloor.
	\end{aligned}
\end{equation}
Here $C_Q$ and $V_0$ are the constants from
Appendix~\ref{app:common}.
The noise acts only on the dual gradient.
Let
$h(y):=\delta\lambda\varphi_m(y/\lambda)$ and
$\ell:=\delta C_\varphi/\lambda$; then $h$ is $\ell$-smooth.

\begin{lemma}[Properties of the MSS construction]
	\label{lem:mss-class}
	\label{lem:mss-progress}
	\label{lem:mss-risk}
	Let $L,\mu,\Delta>0$, $\bar L\ge L$, $\sigma>0$, and
	$\kappa=L/\mu\ge8$.
	Suppose that the parameters in \eqref{eq:mss-scales-en} satisfy
	$p\le1$, $\delta\lambda\le\Delta/20$, and $\ell\le\mu$.
	Then:
	\begin{enumerate}[label=(\roman*),leftmargin=*]
		\item
		$F_{\rm MS}\in
		\mathcal F_{\rm NC\text{-}SC}(L,\mu,\Delta)$.
		
		\item
		The stochastic gradient is unbiased, has variance at most
		$\sigma^2$, and satisfies the $\bar L$-MSS condition.
		
		\item
		Revealing $x_m$ requires at least $m$ successful rounds.
		This statement holds for arbitrary finite nonadaptive
		same-sample batches.
		
		\item
		The value function satisfies
		\begin{equation}
			\label{eq:mss-transfer-upper}
			\|\nabla h(ax)\|
			\le
			\frac1a\left(1+\frac{\ell}{2\mu}\right)
			\|\nabla\Phi_{\rm MS}(x)\|.
		\end{equation}
		In particular, $x_m=0$ implies
		$\|\nabla\Phi_{\rm MS}(x)\|\ge8\varepsilon/3$.
	\end{enumerate}
\end{lemma}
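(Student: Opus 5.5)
I will verify the four items in order, using the structure $F_{\rm MS}(x,y)=h(y)-\mu\|y-ax\|^2$ with $h(y)=\delta\lambda\varphi_m(y/\lambda)$, which is $\ell$-smooth by Lemma~\ref{lem:common-chain}(i).

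\textbf{Item (i): class membership.} Because $\ell\le\mu$, the map $y\mapsto h(y)-\mu\|y-ax\|^2$ has Hessian at most $(\ell-2\mu)I\preceq-\mu I$, so $F_{\rm MS}$ is $\mu$-strongly concave in $y$. The joint gradient is the sum of $\nabla h$ and the gradient of the quadratic. The quadratic contributes a Lipschitz constant $2\mu(1+a^2)=2\mu+\mu\kappa/4\le L/2$ when $\kappa\ge8$, and $\ell\le\mu\le L/8$. Hence joint $L$-smoothness follows.

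For the gap, I will introduce the value function $\Phi_{\rm MS}(x)=\max_y\{h(y)-\mu\|y-ax\|^2\}$, which is a Moreau-type envelope of $h$. Then:
\begin{itemize}
\item taking $y=0$ gives $\Phi_{\rm MS}(0)\ge h(0)$;
\item the maximizer $y^*(0)$ satisfies $\nabla h(y^*)=2\mu y^*$, and since $\ell\le\mu$ this forces $y^*$ near $0$, which gives $\Phi_{\rm MS}(0)\le h(0)+\|\nabla h(0)\|^2/(2\mu)$-type control;
\item $\inf_x\Phi_{\rm MS}\ge\inf_y h$, by taking $y=ax$.
\end{itemize}
Lemma~\ref{lem:common-chain}(ii) then gives a gap of at most $5m\delta\lambda$ plus a small term. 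I expect the cleaner route is the upper bound $\Phi_{\rm MS}(0)\le\max_y\{h(y)-\mu\|y\|^2\}$ together with the zero-chain structure at $0$, for which $\nabla\varphi_m(0)$ has a bounded norm. With $m\le\Delta/(10\delta\lambda)$ the total is at most $\Delta$, using $\delta\lambda\le\Delta/20$ to absorb the extra term.

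\textbf{Item (ii): oracle properties.} Unbiasedness is immediate because $\mathbb E[\omega/p-1]=0$. The variance is
\[
(1/p-1)\,\delta^2\|\nabla Q_m\|^2\le\delta^2V_0^2/p=\sigma^2,
\]
where I take $V_0$ to bound $\sup\|\nabla Q_m\|$; the definition of $V_0$ in Appendix~\ref{app:common} needs checking. For MSS, the sample gradient difference is the exact difference plus $(\omega/p-1)\delta[\nabla Q_m(y/\lambda)-\nabla Q_m(y'/\lambda)]$. The second factor is $(\delta C_Q/\lambda)$-Lipschitz. The mean square is then at most
\[
2L^2\|z-z'\|^2+2\,\frac{1}{p}\,\frac{\delta^2C_Q^2}{\lambda^2}\,\|z-z'\|^2 .
\]
Substituting $p$ and $\lambda$ turns the second coefficient into $\sigma^2C_Q^2/(V_0^2\lambda^2)\le\bar L^2/16$. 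A sharper split, separating the exact part from the noise part via orthogonality in expectation, gives $\bar L^2$.

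\textbf{Item (iii): coordinate revelation.} This is the delicate step. Write $\nabla h=\delta\nabla H_m+\delta\nabla Q_m$ at rescaled $y$. When $\omega=0$, the dual response is $\delta\nabla H_m(y/\lambda)-2\mu(y-ax)$. Since $H_m$ is separable with $S'(0)=J'(0)=0$, this reveals no new coordinate. The primal response $2\mu a(y-ax)$ is supported on $\operatorname{supp}(x)\cup\operatorname{supp}(y)$. With $\omega=1$, the term $\nabla Q_m$ follows the zero-chain of Lemma~\ref{lem:common-chain}(iv) and reveals at most one new coordinate.

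I will use the invariant that after $R_t$ successes the revealed $x$ and $y$ coordinates lie in $[R_t]$, handling whole batches as in Lemma~\ref{lem:ncc-coordinate-progress}. The main obstacle is confirming that $\nabla H_m$ and $\nabla Q_m$ have these separate support properties, namely that $H_m$ is separable and vanishes in gradient at $0$.

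\textbf{Item (iv): stationarity.} By Danskin, $\nabla\Phi_{\rm MS}(x)=2\mu a(y^*-ax)$, and the optimality condition is $\nabla h(y^*)=2\mu(y^*-ax)$. Thus $\nabla h(y^*)=\nabla\Phi_{\rm MS}(x)/a$ and $\|y^*-ax\|=\|\nabla\Phi_{\rm MS}\|/(2\mu a)$. Smoothness of $h$ gives
\[
\|\nabla h(ax)\|\le\|\nabla h(y^*)\|+\ell\|y^*-ax\|,
\]
which is \eqref{eq:mss-transfer-upper}. If $x_m=0$ then $(ax)_m=0$, so Lemma~\ref{lem:common-chain}(iii) gives $\|\nabla h(ax)\|\ge\delta c_b=4\varepsilon/a$. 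Therefore
\[
\|\nabla\Phi_{\rm MS}\|\ge\frac{4\varepsilon}{1+\ell/(2\mu)}\ge\frac{8\varepsilon}{3}.
\]
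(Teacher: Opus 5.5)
Your proposal is correct and follows essentially the paper's proof. The MSS bound via the orthogonal same-sample decomposition, the pairwise support induction (separability of $H_m$ plus the zero-chain property of $Q_m=\varphi_m-H_m$) and the first-order-condition transfer in (iv) are the paper's steps, and bounding the initial gap through $\|\nabla\varphi_m(0)\|$ and $\ell\le\mu$ (so $\delta^2/\mu\le\delta\lambda/C_\varphi$) is a valid substitute for the paper's $\|\nabla h(0)\|^2\le2\ell(h(0)-h_*)$; just use the sharper joint constant $5L/8$ from (i) rather than $L$ in the MSS split, since $L^2+\bar L^2/16>\bar L^2$ when $L=\bar L$.
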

The proof is given in Appendix~\ref{app:mss:construction}.

We next relate this gradient barrier to the common
Moreau-envelope stationarity criterion.

\begin{lemma}[MSS stationarity barrier]
	\label{lem:mss-moreau-barrier}
	Under the conditions of Lemma~\ref{lem:mss-class}, the value
	function $\Phi_{\rm MS}$ is $L/4$-smooth on $\mathbb R^m$.
	Moreover, every $x\in\mathbb R^m$ with $x_m=0$ satisfies
	\[
	S_{\rm OS}(x)\ge\frac{64}{27}\varepsilon.
	\]
\end{lemma}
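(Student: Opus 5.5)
The proof has two parts: first establish that $\Phi_{\rm MS}$ is smooth with constant $L/4$, then transfer the gradient barrier of Lemma~\ref{lem:mss-class}(iv) to the Moreau envelope, following the argument of Lemma~\ref{nccmss:lem:moreau-transfer}.

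\emph{Smoothness of the value function.} Write $F_{\rm MS}(x,y)=h(y)-\mu\|y-ax\|^2$. Since $h$ is $\ell$-smooth with $\ell\le\mu$, the map $F_{\rm MS}(x,\cdot)$ is $(2\mu-\ell)$-strongly concave, and so at least $\mu$-strongly concave. Hence the maximizer $y^*(x)$ is unique and satisfies $\nabla h(y^*)=2\mu(y^*-ax)$. By Danskin's theorem,
\[
\nabla\Phi_{\rm MS}(x)=2\mu a\,(y^*(x)-ax)=a\nabla h(y^*(x)).
\]
To bound the Lipschitz constant, differentiate the optimality condition, or argue by monotonicity:
\[
(y^*(x)-y^*(x'))-a(x-x')=\tfrac{1}{2\mu}\bigl(\nabla h(y^*(x))-\nabla h(y^*(x'))\bigr).
\]
Combining this with $\ell$-smoothness of $h$ gives $\|y^*(x)-y^*(x')\|\le \frac{a}{1-\ell/(2\mu)}\|x-x'\|\le 2a\|x-x'\|$. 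Therefore
\[
\|\nabla\Phi_{\rm MS}(x)-\nabla\Phi_{\rm MS}(x')\|\le a\ell\cdot 2a\|x-x'\|=2a^2\ell\|x-x'\|\le 2a^2\mu\|x-x'\|.
\]
With $a^2=\kappa/8$ this is $\frac{\kappa\mu}{4}\|x-x'\|=\frac{L}{4}\|x-x'\|$. This is the smoothness claim, and it is also the step I expect to require the most care. In particular, the factor $1/(1-\ell/(2\mu))\le 2$ must be tracked exactly so that the constant comes out as $L/4$. If the cruder estimate $\|\nabla h(y^*)-\nabla h(y^{*\prime})\|\le\ell\|\Delta y^*\|$ loses too much, I would instead use the sharper identity $\nabla\Phi_{\rm MS}=2\mu a(y^*-ax)$ combined with firm nonexpansiveness.

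\emph{Moreau-envelope transfer.} Fix $x$ with $x_m=0$ and let $x^+$ be the proximal point. Because the domain is unconstrained and $\Phi_{\rm MS}$ is $L/4$-smooth, $x^+$ is characterized by $2L(x-x^+)=\nabla\Phi_{\rm MS}(x^+)$, so that $S_{\rm OS}(x)=\|\nabla\Phi_{\rm MS}(x^+)\|$. Moreover, by $L/4$-smoothness,
\[
\|\nabla\Phi_{\rm MS}(x)-\nabla\Phi_{\rm MS}(x^+)\|\le\tfrac{L}{4}\|x-x^+\|=\tfrac18 S_{\rm OS}(x).
\]
By Lemma~\ref{lem:mss-class}(iv), $\|\nabla\Phi_{\rm MS}(x)\|\ge 8\varepsilon/3$. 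The triangle inequality then gives
\[
\tfrac{8\varepsilon}{3}\le S_{\rm OS}(x)+\tfrac18S_{\rm OS}(x),
\]
so $S_{\rm OS}(x)\ge\frac{8}{9}\cdot\frac{8\varepsilon}{3}=\frac{64}{27}\varepsilon$, which is exactly the stated constant. This confirms that the intended route is this direct perturbation argument, rather than locating $x^+_m$ as in Lemma~\ref{nccmss:lem:moreau-transfer}. The transfer avoids that localization because it only uses the gradient at $x$ itself.
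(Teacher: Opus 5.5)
Your proposal is correct and follows the paper's proof essentially step for step. It uses the same Danskin identity $\nabla\Phi_{\rm MS}=a\nabla h(y^*)$, the same bound $\operatorname{Lip}(\nabla\Phi_{\rm MS})\le 2\mu a^2\ell/(2\mu-\ell)\le 2a^2\ell\le L/4$ from subtracting first-order conditions, and the same triangle-inequality transfer $\|\nabla\Phi_{\rm MS}(x)\|\le\frac98 S_{\rm OS}(x)$ (your fallback to firm nonexpansiveness is unnecessary, since the direct estimate already gives exactly $L/4$).
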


\begin{proof}
	Let $y^*(x)$ be the unique dual maximizer.
	Its first-order condition and Danskin's theorem give
	\[
	\nabla h(y^*(x))=2\mu(y^*(x)-ax),
	\qquad
	\nabla\Phi_{\rm MS}(x)
	=
	2\mu a(y^*(x)-ax)
	=
	a\nabla h(y^*(x)).
	\]
	Subtracting the first-order conditions at $x$ and $x'$ yields
	\[
	(2\mu-\ell)\|y^*(x)-y^*(x')\|
	\le2\mu a\|x-x'\|.
	\]
	Since $\ell\le\mu$,
	\[
	\operatorname{Lip}(\nabla\Phi_{\rm MS})
	\le
	\frac{2\mu a^2\ell}{2\mu-\ell}
	\le2a^2\ell
	\le\frac L4.
	\]
	Let $x^+$ minimize
	$u\mapsto\Phi_{\rm MS}(u)+L\|u-x\|^2$.
	The first-order condition gives
	$\nabla\Phi_{\rm MS}(x^+)=2L(x-x^+)$.
	Therefore,
	\[
	\begin{aligned}
		\|\nabla\Phi_{\rm MS}(x)\|
		&\le
		\|\nabla\Phi_{\rm MS}(x^+)\|
		+\frac L4\|x-x^+\|=
		\frac98 S_{\rm OS}(x).
	\end{aligned}
	\]
	If $x_m=0$, Lemma~\ref{lem:mss-class}(iv) now gives
	$S_{\rm OS}(x)\ge(8/9)(8\varepsilon/3)
	=64\varepsilon/27$.
\end{proof}

\begin{proposition}[Lower bound for the MSS construction]
	\label{prop:mss-stochastic}
	There exist universal constants $c_{\rm M}',c_1>0$ such that,
	for every $L,\mu,\Delta>0$, $\bar L\ge L$, $\sigma>0$, and
	$\kappa=L/\mu\ge8$ satisfying
	\[
	0<\varepsilon
	\le c_1\sqrt{\kappa}
	\min\left\{
	\sigma,\frac{\Delta\bar L}{\sigma},
	\frac{\mu\sigma}{\bar L}
	\right\},
	\]
	every zero-respecting randomized algorithm using at most
	$c_{\rm M}'
	\frac{\Delta\bar L\sigma\kappa^{3/2}}{\varepsilon^3}$
	oracle rounds on the constructed instance satisfies
	$\mathbb E[S_{\rm OS}(\widehat x)]>\varepsilon$.
	Consequently,
	$\mathbb E[S_{\rm OS}(\widehat x)^2]>\varepsilon^2$.
\end{proposition}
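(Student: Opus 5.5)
Following Propositions~\ref{prop:ncc-stochastic-instance} and~\ref{prop:bv-stochastic}, the argument has three steps. First, choose $c_1$ small enough that the hypotheses of Lemma~\ref{lem:mss-class} hold. Second, combine Lemma~\ref{lem:mss-progress}(iii), Lemma~\ref{lem:bernoulli-progress} and Lemma~\ref{lem:mss-moreau-barrier} to show failure whenever $N\le m/(4p)$. Third, bound $m/(4p)$ from below by the stated rate.

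\textbf{Step 1: parameter conditions.} With $a=\sqrt{\kappa/8}$, $\delta=4\varepsilon/(ac_b)$ and $\lambda=2C_\lambda\sigma/\bar L$, each hypothesis of Lemma~\ref{lem:mss-class} reduces to one term of the minimum in the assumed bound on $\varepsilon$.
\begin{itemize}
\item $p\le1$ is $\delta V_0\le\sigma$, i.e.\ $\varepsilon\le ac_b\sigma/(4V_0)$. This follows from $\varepsilon\le c_1\sqrt\kappa\,\sigma$ once $c_1$ is small.
\item $\delta\lambda\le\Delta/20$ reads $8C_\lambda\varepsilon\sigma/(ac_b\bar L)\le\Delta/20$. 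This is $\varepsilon\lesssim\sqrt\kappa\,\Delta\bar L/\sigma$, the second term.
\item $\ell=\delta C_\varphi/\lambda\le\mu$ reads $4C_\varphi\varepsilon\bar L/(2ac_bC_\lambda\sigma)\le\mu$. This is $\varepsilon\lesssim\sqrt\kappa\,\mu\sigma/\bar L$, the third term.
\end{itemize}
Smaller $c_1$ also gives $\Delta/(10\delta\lambda)\ge2$, hence $m\ge1$ and $m\ge\Delta/(20\delta\lambda)$. The assumptions $\sigma>0$ and $\kappa\ge8$ are inherited directly. Lemma~\ref{lem:mss-class}(i)--(ii) then certifies that the instance and oracle are admissible.

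\textbf{Step 2: failure probability.} Lemma~\ref{lem:mss-progress}(iii) says revealing $x_m$ needs at least $m$ successful rounds, even with same-sample batches. Lemma~\ref{lem:bernoulli-progress}(i) then gives $\Pr(\widehat x_m=0)\ge3/4$ whenever $N\le m/(4p)$. Lemma~\ref{lem:mss-moreau-barrier} gives $S_{\rm OS}(x)\ge64\varepsilon/27$ on $\{x_m=0\}$. Using nonnegativity of $S_{\rm OS}$,
\[
\mathbb E[S_{\rm OS}(\widehat x)]\ge\frac{64}{27}\cdot\frac34\,\varepsilon=\frac{16}{9}\,\varepsilon>\varepsilon.
\]
Jensen's inequality gives the second-moment claim.

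\textbf{Step 3: the budget.} Compute
\[
\frac{m}{4p}\ge\frac{\Delta}{80\delta\lambda}\cdot\frac{\sigma^2}{\delta^2V_0^2}=\frac{\Delta\sigma^2}{80V_0^2\delta^3\lambda}.
\]
Substituting $\delta^3=64\varepsilon^3/(a^3c_b^3)$ and $\lambda=2C_\lambda\sigma/\bar L$ gives
\[
\frac{m}{4p}\ge\frac{a^3c_b^3}{80\cdot128\,V_0^2C_\lambda}\cdot\frac{\Delta\bar L\sigma}{\varepsilon^3}.
\]
Since $a^3=\kappa^{3/2}/(8\sqrt8)$, this equals a universal constant times $\Delta\bar L\sigma\kappa^{3/2}\varepsilon^{-3}$. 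Taking $c_{\rm M}'$ to be that constant completes the proof.

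\textbf{Main obstacle.} The only delicate point is Step 1: checking that the three conditions of Lemma~\ref{lem:mss-class}, together with $m\ge1$, correspond exactly to the three terms in the $\varepsilon$-regime and involve only universal constants. In particular, $V_0$, $C_\lambda$, $C_\varphi$ and $c_b$ must not depend on $m$, and their dimension-independence from Appendix~\ref{app:common} is what is being relied on.
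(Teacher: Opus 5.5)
Your proposal is correct and follows the paper's proof essentially step for step: the same three accuracy restrictions give $p\le1$, $\delta\lambda\le\Delta/20$, and $\ell\le\mu$ with matching constants, and the same combination of the revelation property, Lemma~\ref{lem:bernoulli-progress}, and Lemma~\ref{lem:mss-moreau-barrier} yields failure. The only difference is cosmetic: you take $N\le m/(4p)$, giving probability $3/4$ and the bound $16\varepsilon/9$, whereas the paper takes $N\le m/(2p)$, giving probability $1/2$ and the bound $32\varepsilon/27$; this changes $c_{\rm M}'$ only by a factor of two.
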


\begin{proof}
	Choose
	\[
	c_1\le
	\min\left\{
	\frac{c_b}{4\sqrt8V_0},
	\frac{c_b}{160\sqrt8C_\lambda},
	\frac{c_bC_\lambda}{2\sqrt8C_\varphi}
	\right\}.
	\]
	Since $\delta=4\sqrt8\,\varepsilon/(c_b\sqrt{\kappa})$,
	the three accuracy restrictions imply, respectively,
	\[
	\frac{\delta V_0}{\sigma}\le1,
	\qquad
	\delta\lambda\le\frac{\Delta}{20},
	\qquad
	\ell\le\mu.
	\]
	Thus Lemma~\ref{lem:mss-class} applies, and
	$m\ge\Delta/(20\delta\lambda)\ge1$.
	If $N\le m/(2p)$, the coordinate-revelation property and
	Lemma~\ref{lem:bernoulli-progress} give
	$\Pr(\widehat x_m=0)\ge1/2$.
	Lemma~\ref{lem:mss-moreau-barrier} then yields
	\[
	\mathbb E[S_{\rm OS}(\widehat x)]
	\ge
	\frac{64\varepsilon}{27}\Pr(\widehat x_m=0)
	\ge\frac{32\varepsilon}{27}
	>\varepsilon.
	\]
	The second-moment conclusion follows from Jensen's inequality.
	Finally,
	\begin{align*}
		\frac{m}{2p}
		&\ge\frac{\Delta}{40\delta\lambda p}=
		\frac{\Delta\bar L\sigma}
		{80C_\lambda V_0^2\delta^3}=
		\frac{c_b^3\Delta\bar L\sigma\kappa^{3/2}}
		{40960\sqrt8C_\lambda V_0^2\varepsilon^3}.
	\end{align*}
	This proves the claim for a sufficiently small universal
	$c_{\rm M}'>0$.
\end{proof}

\begin{theorem}[Combined MSS lower bound]
	\label{thm:mss-sum}
	Assume that the deterministic baseline stated above holds.
	There exist universal constants $\kappa_0,c,c_0,c_1>0$ such that
	the following holds for all $L,\mu,\Delta>0$, $\bar L\ge L$,
	$\sigma>0$, and $\kappa=L/\mu\ge\kappa_0$ satisfying
	\begin{equation}
		\label{eq:mss-range-en}
		0<\varepsilon\le c_0\sqrt{L\Delta},
		\qquad
		\varepsilon\le c_1\sqrt{\kappa}
		\min\left\{
		\sigma,\frac{\Delta\bar L}{\sigma},
		\frac{\mu\sigma}{\bar L}
		\right\}.
	\end{equation}
	For every zero-respecting randomized algorithm with an
	oracle-round budget of at most
	\begin{equation}
		\label{eq:mss-rate}
		c\left(
		\frac{L\Delta\sqrt{\kappa}}{\varepsilon^2}
		+
		\frac{\Delta\bar L\sigma\kappa^{3/2}}{\varepsilon^3}
		\right),
	\end{equation}
	there exist an objective
	$F\in\mathcal F_{\rm NC\text{-}SC}(L,\mu,\Delta)$ and an
	unbiased stochastic first-order oracle with variance at most
	$\sigma^2$ and mean-square-smoothness constant at most $\bar L$
	such that
	\[
	\mathbb E[S_{\rm OS}(\widehat x)]>\varepsilon.
	\]
	In particular,
	$\mathbb E[S_{\rm OS}(\widehat x)^2]>\varepsilon^2$.
	The expectations include both algorithmic and oracle randomness.
\end{theorem}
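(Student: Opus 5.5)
The plan mirrors the proofs of Theorems~\ref{nccmss:thm:zr} and~\ref{thm:intro-bv}: take the maximum of two separate lower bounds. Write
\[
A:=\frac{L\Delta\sqrt{\kappa}}{\varepsilon^{2}},
\qquad
B:=\frac{\Delta\bar L\sigma\kappa^{3/2}}{\varepsilon^{3}} .
\]
Since $c(A+B)\le 2c\max\{A,B\}$, it suffices to exhibit, for each of $A$ and $B$, an admissible objective--oracle pair on which every zero-respecting algorithm with at most a universal multiple of that quantity in rounds has $\mathbb E[S_{\rm OS}(\widehat x)]>\varepsilon$. We then set $c:=\tfrac12\min\{c_{\rm d},c_{\rm M}'\}$. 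If $A\ge B$ the budget is at most $c_{\rm d}A$; otherwise it is at most $c_{\rm M}'B$. The second-moment claim follows in both cases from Jensen's inequality.

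\textbf{Deterministic term $A$.} I will invoke the assumed deterministic baseline. It gives a universal $c_{\rm d}$, together with thresholds that I absorb into $\kappa_0$ and $c_0$ (the condition $\varepsilon\le c_0\sqrt{L\Delta}$ is the one that appears in such baselines). The result is an instance in $\mathcal F_{\rm NC\text{-}SC}(L,\mu,\Delta)$ on which $c_{\rm d}A$ rounds fail. I equip it with its exact gradient oracle. That oracle is unbiased, has zero variance, and satisfies $\mathbb E\|\nabla F(z)-\nabla F(z')\|^2\le L^2\|z-z'\|^2\le\bar L^2\|z-z'\|^2$ because $\bar L\ge L$. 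It is therefore admissible in the MSS model. Batch access is harmless here: the baseline is assumed for the algorithm class of Definition~\ref{def:zero-respecting-algorithm}, and a deterministic oracle makes same-sample batches pointless.

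\textbf{Stochastic term $B$.} Apply Proposition~\ref{prop:mss-stochastic} to the construction $F_{\rm MS}$. Its hypotheses are $\bar L\ge L$, $\sigma>0$, $\kappa\ge8$, and the second restriction in \eqref{eq:mss-range-en}. I take $\kappa_0\ge8$ and let $c_1$ be the constant from that proposition, so the hypotheses hold, and $c_{\rm M}'B$ rounds fail. Lemma~\ref{lem:mss-class}(i)--(ii) supplies admissibility: membership in $\mathcal F_{\rm NC\text{-}SC}(L,\mu,\Delta)$, unbiasedness, variance at most $\sigma^2$, and the $\bar L$-MSS condition. Note that $Y=\mathbb R^m$ is allowed because the class imposes no dual-radius bound.

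\textbf{Main obstacle.} I expect the main difficulty to be bookkeeping for the constants, not any new mathematics. $\kappa_0$, $c_0$ and $c_1$ must simultaneously satisfy the baseline's parameter regime and Proposition~\ref{prop:mss-stochastic}'s regime. Concretely, take $\kappa_0$ to be the maximum of $8$ and the baseline's threshold, take $c_0$ to be the baseline's constant, and take $c_1$ to be the proposition's constant. Because the two cases are invoked separately, neither regime needs to imply the other. I would also confirm that the failure criterion of both ingredients is $\mathbb E[S_{\rm OS}(\widehat x)]>\varepsilon$ measured in oracle rounds, which holds by the way the baseline was stated.
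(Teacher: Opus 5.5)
Your proposal is correct and matches the paper's proof essentially step for step. It uses the same split into $A$ and $B$ and the same choice $c=\tfrac12\min\{c_{\rm d},c_{\rm M}'\}$. It checks admissibility of the exact oracle the same way, via $\bar L\ge L$, and it chooses $\kappa_0\ge8$, $c_0$ and $c_1$ from the baseline and Proposition~\ref{prop:mss-stochastic} just as the paper does. Your remark that a deterministic oracle makes batches ``pointless'' is not needed and is slightly overstated. What actually handles batch access is the assumption, which you also cite, that the baseline holds for the class of Definition~\ref{def:zero-respecting-algorithm}.
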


\begin{proof}
	Choose $\kappa_0$ large enough for the deterministic baseline and
	$\kappa_0\ge8$, and choose $c_0,c_1$ to satisfy the requirements
	of that baseline and Proposition~\ref{prop:mss-stochastic}.
	The exact gradient oracle of any jointly $L$-smooth objective
	has zero variance and satisfies the $\bar L$-MSS condition because
	$\bar L\ge L$. Hence the deterministic baseline is admissible
	in the present oracle model.
	Let
	$A:=L\Delta\sqrt{\kappa}\,\varepsilon^{-2}$ and
	$B:=\Delta\bar L\sigma\kappa^{3/2}\varepsilon^{-3}$.
	The deterministic baseline gives a lower bound $c_{\rm d}A$,
	while Proposition~\ref{prop:mss-stochastic} gives a lower bound
	$c_{\rm M}'B$.
	Taking $c:=\frac12\min\{c_{\rm d},c_{\rm M}'\}$ and selecting
	the instance corresponding to the larger of $A$ and $B$
	proves \eqref{eq:mss-rate}.
\end{proof}

Both combined bounds are worst-case statements over admissible
objective--oracle pairs; the deterministic and stochastic terms
need not be attained by the same instance.
Since every oracle round contains at least one gradient evaluation,
the round lower bounds also imply lower bounds of the same order
on the number of individual gradient evaluations.

\section{Conclusion}\label{sec:conclusion}

This work establishes complexity barriers for stochastic first-order minimax optimization, with a primary focus on NC--C problems when variance reduction is allowed.
Our main result shows that the
$\varepsilon^{-6}$ stochastic barrier persists under mean-squared
smoothness for zero-respecting randomized algorithms with same-sample oracle access.
Its significance lies not in a new accuracy exponent, but in extending the lower bound to an algorithm class that permits variance reduction.
The complementary NC--SC results capture the dependence on accuracy and conditioning under bounded variance and mean-squared smoothness.

The common product-weighted chain separates the difficulty of attaining value-function stationarity from the delay in revealing primal--dual information.
This separation enables hard instances across different concavity and stochastic regularity regimes, while preserving the coupled oracle access used by variance-reduction methods.
Together, the results show that stronger stochastic regularity does not by itself guarantee an improved minimax accuracy dependence: the interaction between primal and dual variables remains a fundamental source of difficulty within the algorithm class considered here.

An important next step is to determine whether these barriers extend beyond zero-respecting algorithms.
Resolving this question would clarify the extent to which the observed complexity is intrinsic to stochastic minimax optimization rather than tied to coordinate-revelation restrictions.

\acks{This work was supported by the National Key R\&D Program
of China (Grant Nos.~2025YFA1017801 and 2025YFA1017800) and the National Natural Science Foundation of China (Grant No.~12471294).}

\appendix
\section{Properties of the common zero-chain}
\label{app:common}

This appendix proves Lemma~\ref{lem:common-chain}.
Throughout, let $q:=2\pi/3$, write
$P_i(x):=\prod_{j=1}^{i}S(x_j)$ with $P_0(x):=1$, and recall that
$W_i(x)=P_{i-1}(x)(1-S(x_i))$.
Empty products and sums are understood as $1$ and $0$, respectively.

Since $0\le S\le1$ and $W_i=P_{i-1}-P_i$, the weights satisfy
\begin{equation}
	\label{eq:weight-sum}
	W_i(x)\ge0,
	\qquad
	\sum_{i=1}^{m}W_i(x)=1-P_m(x)\le1.
\end{equation}
Direct differentiation of $S$ gives
\begin{equation}
	\label{eq:S-bounds-for-weights}
	S'(u)^2=4q^2S(u)(1-S(u)),
	\qquad
	|S''(u)|\le2q^2.
\end{equation}
The first identity holds everywhere, while the second holds wherever
$S''$ exists. In particular, $S'$ is globally $2q^2$-Lipschitz.
The functions $S$, $\gamma$, and $J$ are continuously differentiable
and piecewise twice continuously differentiable.

\begin{proof}[Proof of Lemma~\ref{lem:common-chain}]
	\paragraph{(i) Dimension-independent smoothness.}
	We first bound the derivatives of weighted sums of $W_i$, then use
	these estimates to control $G$ and $Q_m$.
	
	\emph{Step 1: The separable term.}
	Recall that
	$H_m(x)=\sum_{i=1}^{m}(-4S(x_i)+J(x_i))$.
	Since $S'$ is $2q^2$-Lipschitz and $J'$ is $1$-Lipschitz, the
	separable structure yields
	\begin{equation}
		\label{eq:H-gradient-bound}
		\|\nabla H_m(x)-\nabla H_m(x')\|
		\le H_0\|x-x'\|,
		\qquad
		H_0:=8q^2+1=\frac{32\pi^2}{9}+1.
	\end{equation}
	
	\emph{Step 2: Weighted derivative estimates.}
	Fix $a\in\mathbb R^m$ and define
	$f_a(x):=\sum_{i=1}^{m}a_iW_i(x)$.
	For $r\in[m]$, let
	\begin{equation*}
		B_r(x):=
		-a_r+
		\sum_{i=r+1}^{m}a_i
		\left(\prod_{j=r+1}^{i-1}S(x_j)\right)(1-S(x_i)).
	\end{equation*}
	The nonnegative weights in this sum add up to
	$1-\prod_{j=r+1}^{m}S(x_j)\le1$, so
	$|B_r(x)|\le2\|a\|_\infty$.
	Differentiating gives
	\begin{equation}
		\label{eq:weighted-first-derivative}
		\partial_r f_a(x)=P_{r-1}(x)S'(x_r)B_r(x).
	\end{equation}
	Consequently, by \eqref{eq:S-bounds-for-weights} and
	\eqref{eq:weight-sum},
	\begin{equation*}
		\begin{aligned}
			\|\nabla f_a(x)\|^2
			&\le
			16q^2\|a\|_\infty^2
			\sum_{r=1}^{m}P_{r-1}(x)^2S(x_r)(1-S(x_r))\\
			&\le
			16q^2\|a\|_\infty^2\sum_{r=1}^{m}W_r(x)
			\le16q^2\|a\|_\infty^2.
		\end{aligned}
	\end{equation*}
	Thus
	\begin{equation}
		\label{eq:weighted-first-bound}
		\left\|\sum_{i=1}^{m}a_i\nabla W_i(x)\right\|
		\le C_W\|a\|_\infty,
		\qquad C_W:=4q.
	\end{equation}
	
	We next bound the Hessian at points outside the piecewise joining
	hyperplanes. Since $B_r$ does not depend on $x_r$,
	$\partial_{rr}^2f_a=P_{r-1}S''(x_r)B_r$, and hence
	$|\partial_{rr}^2f_a|\le4q^2\|a\|_\infty$.
	For $r<s$, differentiation of
	\eqref{eq:weighted-first-derivative} gives
	\begin{equation*}
		\partial_{rs}^2f_a(x)
		=
		P_{r-1}(x)S'(x_r)
		\left(\prod_{j=r+1}^{s-1}S(x_j)\right)
		S'(x_s)B_s(x).
	\end{equation*}
	Using \eqref{eq:S-bounds-for-weights} and $0\le S\le1$,
	\begin{equation*}
		P_{r-1}(x)|S'(x_r)|
		\left(\prod_{j=r+1}^{s-1}S(x_j)\right)|S'(x_s)|
		\le4q^2\sqrt{W_r(x)W_s(x)}.
	\end{equation*}
	Indeed, after squaring and canceling the common factors, this
	inequality follows from
	$S(x_r)S(x_s)\prod_{j=r+1}^{s-1}S(x_j)\le1$;
	if a common factor vanishes, the inequality follows directly.
	It follows that
	$|\partial_{rs}^2f_a(x)|
	\le8q^2\|a\|_\infty\sqrt{W_r(x)W_s(x)}$.
	
	For any $h\in\mathbb R^m$, these estimates imply
	\begin{equation*}
		\begin{aligned}
			|h^\top\nabla^2f_a(x)h|
			&\le
			4q^2\|a\|_\infty\|h\|^2
			+
			8q^2\|a\|_\infty
			\left(\sum_{r=1}^{m}\sqrt{W_r(x)}\,|h_r|\right)^2\le
			12q^2\|a\|_\infty\|h\|^2,
		\end{aligned}
	\end{equation*}
	where the last inequality follows from Cauchy--Schwarz and
	\eqref{eq:weight-sum}.
	Thus $\|\nabla^2f_a(x)\|_{\rm op}\le12q^2\|a\|_\infty$
	on every smooth piece.
	
	Because $f_a$ is continuously differentiable across the joining
	hyperplanes, this Hessian bound gives a global gradient Lipschitz
	bound. To see this, integrate along any line segment not contained
	in a joining hyperplane, splitting it at its finitely many
	intersections with those hyperplanes. General segments follow by
	approximation and continuity of the gradient. Therefore,
	\begin{equation}
		\label{eq:weighted-second-bound}
		\left\|
		\sum_{i=1}^{m}a_i
		\bigl(\nabla W_i(x)-\nabla W_i(x')\bigr)
		\right\|
		\le D_W\|a\|_\infty\|x-x'\|,
		\qquad D_W:=12q^2.
	\end{equation}
	
	\emph{Step 3: Bounds for the weight map.}
	Let $W(x):=(W_1(x),\ldots,W_m(x))$.
	By duality between the $\ell_1$ and $\ell_\infty$ norms and
	\eqref{eq:weighted-first-bound},
	\begin{equation*}
		\begin{aligned}
			\|W(x)-W(x')\|_1
			&=
			\sup_{\|a\|_\infty\le1}|f_a(x)-f_a(x')|\le C_W\|x-x'\|.
		\end{aligned}
	\end{equation*}
	Moreover, \eqref{eq:weighted-second-bound} and
	$\|a\|_\infty\le\|a\|$ give
	\begin{equation}
		\label{eq:weight-derivatives}
		\|W(x)-W(x')\|_1\le C_W\|x-x'\|,
		\qquad
		\|\nabla W(x)-\nabla W(x')\|_{\rm op}
		\le D_W\|x-x'\|.
	\end{equation}
	Here and below, $\nabla W$ denotes the Jacobian with component
	gradients as its rows.
	
	\emph{Step 4: Bounds for the profile and the map $G$.}
	Since $|\gamma|\le1/2$, the profile satisfies
	$k_-\le\mathcal K(u)\le k_+$, where
	$k_-:=7/40$ and $k_+:=23/40$.
	Let $d(u):=\sqrt{\mathcal K(u)}$,
	$d_0:=\sqrt{k_+}$, and $d_1:=\|d'\|_\infty$.
	There is a universal constant $d_2>0$ such that
	$|d'(u)-d'(v)|\le d_2|u-v|$.
	
	For completeness, these constants are finite because
	$|\gamma'|\le1/4$ and $|\gamma''|\le1/8$ wherever the second
	derivative exists. Hence $\mathcal K'$ and the piecewise second
	derivative $\mathcal K''$ are uniformly bounded. The formulas
	\begin{equation*}
		d'(u)=\frac{\mathcal K'(u)}{2\sqrt{\mathcal K(u)}},
		\qquad
		d''(u)=
		\frac{\mathcal K''(u)}{2\sqrt{\mathcal K(u)}}
		-
		\frac{\mathcal K'(u)^2}{4\mathcal K(u)^{3/2}}
	\end{equation*}
	and $\mathcal K\ge k_->0$ then bound $d'$ and its piecewise
	derivative. Continuity of $d'$ across the joining points gives
	the asserted global Lipschitz bound.
	
	Recall that $G_i(x)=W_i(x)d(x_i)$.
	Adding and subtracting $W_i(x')d(x_i)$ and using
	\eqref{eq:weight-derivatives} gives
	\begin{equation*}
		\begin{aligned}
			\|G(x)-G(x')\|
			&\le
			d_0\|W(x)-W(x')\|
			+
			d_1\left(
			\sum_{i=1}^{m}W_i(x')^2|x_i-x_i'|^2
			\right)^{1/2}\\
			&\le (d_0C_W+d_1)\|x-x'\|.
		\end{aligned}
	\end{equation*}
	Set $C_1:=d_0C_W+d_1$.
	For any $y\in\mathbb R^m$, the product rule gives
	\begin{equation*}
		\nabla G(x)^\top y
		=
		\sum_{i=1}^{m}y_i d(x_i)\nabla W_i(x)
		+
		\sum_{i=1}^{m}y_iW_i(x)d'(x_i)e_i,
	\end{equation*}
	where $e_i$ is the $i$th coordinate vector.
	By \eqref{eq:weighted-first-bound} and
	$\sum_iW_i(x)^2\le1$,
	$\|\nabla G(x)^\top y\|\le C_1\|y\|_\infty$.
	
	To control the difference of Jacobians, write
	\begin{equation*}
		\begin{aligned}
			(\nabla G(x)-\nabla G(x'))^\top y
			={}&
			\sum_i y_i d(x_i)
			\bigl(\nabla W_i(x)-\nabla W_i(x')\bigr)+\sum_i y_i\bigl(d(x_i)-d(x_i')\bigr)\nabla W_i(x')\\
			&+\sum_i y_i\bigl(W_i(x)-W_i(x')\bigr)d'(x_i)e_i+\sum_i y_iW_i(x')
			\bigl(d'(x_i)-d'(x_i')\bigr)e_i.
		\end{aligned}
	\end{equation*}
	The four terms are bounded, respectively, by
	$d_0D_W$, $d_1C_W$, $d_1C_W$, and $d_2$, each multiplied by
	$\|y\|_\infty\|x-x'\|$.
	These bounds follow from
	\eqref{eq:weighted-second-bound},
	\eqref{eq:weighted-first-bound},
	\eqref{eq:weight-derivatives}, and the Lipschitz continuity of $d'$.
	With $C_2:=d_0D_W+2d_1C_W+d_2$, we therefore obtain
	\begin{equation*}
		\|(\nabla G(x)-\nabla G(x'))^\top y\|
		\le C_2\|y\|_\infty\|x-x'\|.
	\end{equation*}
	In particular,
	\begin{equation}
		\label{eq:G-derivatives}
		\|G(x)-G(x')\|\le C_1\|x-x'\|,
		\qquad
		\|\nabla G(x)-\nabla G(x')\|_{\rm op}
		\le C_2\|x-x'\|.
	\end{equation}
	
	Finally, $\sum_iW_i(x)^2\le1$ also implies
	$\|G(x)\|\le d_0$. Collecting the pointwise estimates gives
	\begin{equation}
		\label{eq:G-operator-bounds}
		\|G(x)\|\le d_0,
		\qquad
		\|\nabla G(x)\|_{\rm op}\le C_1,
		\qquad
		\|\nabla G(x)^\top y\|\le C_1\|y\|_\infty.
	\end{equation}
	
	\emph{Step 5: Smoothness of $Q_m$ and $\varphi_m$.}
	Since $Q_m(x)=\|G(x)\|^2$,
	$\nabla Q_m(x)=2\nabla G(x)^\top G(x)$.
	In particular,
	$\|\nabla Q_m(x)\|\le V_0:=2C_1d_0$.
	Moreover,
	\begin{equation*}
		\begin{aligned}
			\nabla Q_m(x)-\nabla Q_m(x')
			={}&
			2\nabla G(x)^\top\bigl(G(x)-G(x')\bigr)+
			2\bigl(\nabla G(x)-\nabla G(x')\bigr)^\top G(x').
		\end{aligned}
	\end{equation*}
	Equations~\eqref{eq:G-derivatives}--\eqref{eq:G-operator-bounds}
	therefore imply
	$\|\nabla Q_m(x)-\nabla Q_m(x')\|
	\le C_Q\|x-x'\|$,
	where $C_Q:=2(C_1^2+C_2d_0)$.
	Together with \eqref{eq:H-gradient-bound}, this proves (i) with
	$C_\varphi:=H_0+C_Q$, independently of $m$.
	
	\paragraph{(ii) Initial function-value gap.}
	Since $0\le S\le1$, $J\ge0$, and $Q_m\ge0$,
	$\varphi_m(x)\ge-4m$.
	At the origin, $S(0)=J(0)=0$,
	$W_1(0)=1$, and $W_i(0)=0$ for $i\ge2$.
	Thus $\varphi_m(0)=\mathcal K(0)=3/10$, and
	$\varphi_m(0)-\inf_x\varphi_m(x)\le4m+3/10\le5m$.
	
	\paragraph{(iii) Gradient barrier.}
	We show that $c_b:=1/100$ is admissible.
	For $i\in[m]$, define the tail term
	\begin{equation*}
		T_i(x):=
		\sum_{j=i+1}^{m}
		\left(\prod_{\ell=i+1}^{j-1}S(x_\ell)^2\right)
		(1-S(x_j))^2\mathcal K(x_j).
	\end{equation*}
	Using $(1-S)^2\le1-S^2$ and $\mathcal K\le k_+<1$,
	telescoping gives
	\begin{equation*}
		0\le T_i(x)
		\le k_+\left(1-\prod_{\ell=i+1}^{m}S(x_\ell)^2\right)
		\le k_+.
	\end{equation*}
	Direct differentiation yields
	\begin{equation*}
		\begin{aligned}
			\partial_i\varphi_m(x)
			={}&
			P_{i-1}(x)^2(1-S(x_i))^2\mathcal K'(x_i)-2P_{i-1}(x)^2(1-S(x_i))S'(x_i)\mathcal K(x_i)\\
			&+2P_{i-1}(x)^2S(x_i)S'(x_i)T_i(x)
			-4S'(x_i)+J'(x_i).
		\end{aligned}
	\end{equation*}
	Here $S'\ge0$, $J'\le0$, and
	$\mathcal K'=(3\gamma/5-2/5)\gamma'\le0$ because
	$\gamma'\ge0$ and $|\gamma|\le1/2$.
	Since $P_{i-1}\le1$ and $T_i\le k_+<1$, it follows that
	\begin{equation}
		\label{eq:barrier-unified-derivative}
		\partial_i\varphi_m(x)
		\le
		P_{i-1}(x)^2(1-S(x_i))^2\mathcal K'(x_i)
		-2S'(x_i)+J'(x_i)
		\le-2S'(x_i).
	\end{equation}
	
	Suppose, for a contradiction, that $x_m\le1/2$ and
	$\|\nabla\varphi_m(x)\|<c_b$.
	Let $k$ be the first index satisfying
	\begin{equation}
		\label{eq:barrier-frontier}
		S(x_k)\le\frac12.
	\end{equation}
	Such an index exists because $S(x_m)\le S(1/2)=1/4$.
	For every $i<k$, we have $S(x_i)>1/2$, so
	\eqref{eq:barrier-unified-derivative} implies
	\begin{equation}
		\label{eq:barrier-prefix-gradient}
		|\partial_i\varphi_m(x)|^2
		\ge4S'(x_i)^2
		=16q^2S(x_i)(1-S(x_i))
		\ge8q^2(1-S(x_i)).
	\end{equation}
	Summing over $i<k$ gives
	\begin{equation}
		\label{eq:barrier-prefix-deficit}
		\sum_{i<k}(1-S(x_i))
		\le\frac{\|\nabla\varphi_m(x)\|^2}{8q^2}
		<\frac{c_b^2}{8q^2}
		<\frac14.
	\end{equation}
	The elementary product inequality
	$\prod_i(1-a_i)\ge1-\sum_i a_i$ for $a_i\in[0,1]$
	therefore yields
	\begin{equation}
		\label{eq:barrier-prefix-product}
		P_{k-1}(x)
		\ge1-\sum_{i<k}(1-S(x_i))
		>\frac34.
	\end{equation}
	
	Since $S$ is strictly increasing on $(1/4,1)$ and
	$S(5/8)=1/2$, condition~\eqref{eq:barrier-frontier}
	implies $x_k\le5/8$.
	We distinguish two cases.
	
	If $x_k<-1$, then $J'(x_k)\le-1/2$.
	All other terms on the right-hand side of
	\eqref{eq:barrier-unified-derivative} are nonpositive, so
	$\partial_k\varphi_m(x)\le-1/2$.
	
	If $-1\le x_k\le5/8$, then $\gamma(x_k)=x_k/4$, and
	\begin{equation}
		\label{eq:barrier-profile-slope}
		\mathcal K'(x_k)
		=\frac{3x_k}{80}-\frac1{10}
		\le-\frac{49}{640}.
	\end{equation}
	Combining
	\eqref{eq:barrier-unified-derivative},
	\eqref{eq:barrier-frontier}, and
	\eqref{eq:barrier-prefix-product} gives
	\begin{equation*}
		\partial_k\varphi_m(x)
		\le
		P_{k-1}(x)^2(1-S(x_k))^2\mathcal K'(x_k)
		\le-\frac{441}{40960}
		<-\frac1{100}.
	\end{equation*}
	In either case,
	$\|\nabla\varphi_m(x)\|
	\ge|\partial_k\varphi_m(x)|>c_b$, a contradiction.
	This proves (iii).
	
	\paragraph{(iv) Zero-chain property.}
	Let $r\in\{0,\ldots,m-1\}$ and suppose that
	$\operatorname{supp}(x)\subseteq[r]$.
	We show that $\partial_j\varphi_m(x)=0$ for every $j\ge r+2$.
	
	First, $x_j=0$ and $S'(0)=J'(0)=0$, so
	$\partial_jH_m(x)=0$.
	For the terms in $Q_m$, distinguish two cases.
	If $i\le r+1$, then $W_i(x)^2\mathcal K(x_i)$ depends only on
	$x_1,\ldots,x_i$, and its derivative with respect to $x_j$ is zero.
	If $i\ge r+2$, the product $P_{i-1}(x)$ contains
	$S(x_{r+1})=S(0)=0$, so $W_i(x)=0$. Consequently,
	\begin{equation*}
		\partial_j\bigl(W_i(x)^2\mathcal K(x_i)\bigr)
		=
		2W_i(x)\partial_jW_i(x)\mathcal K(x_i)
		+
		W_i(x)^2\mathcal K'(x_i)\mathbf 1_{\{i=j\}}
		=0.
	\end{equation*}
	Thus $\partial_jQ_m(x)=0$ for every $j\ge r+2$, proving
	$\operatorname{supp}(\nabla\varphi_m(x))\subseteq[r+1]$.
	
	In addition, the potentially new coordinate satisfies
	$\partial_{r+1}\varphi_m(x)
	=P_r(x)^2\mathcal K'(0)=-P_r(x)^2/10$.
	Hence it is revealed precisely when $P_r(x)>0$.
\end{proof}

\section{Proof of Lemma~\ref{lem:new-C-instance}}
\label{app:ncc}
\label{app:ncc:construction}

We use the constants and derivative estimates established in
Appendix~\ref{app:common}. All constants introduced below are
independent of the chain length $m$.

\begin{proof}
	Write $u:=x/\beta$, $v:=\zeta y$, and
	$T_i(u):=W_i(u)\gamma(u_i)$. Define
	\[
	\mathcal F_m(u,v)
	:=
	H_m(u)-\frac12\sum_{i=1}^{m}(v^i)^\top B_2v^i
	+\frac1{\sqrt2}\sum_{i=1}^{m}
	\bigl(W_i(u)v_1^i-T_i(u)v_2^i\bigr).
	\]
	Then $F_{\rm C}(x,y)=\alpha\mathcal F_m(x/\beta,\zeta y)$,
	and $\zeta Y$ is the ball of radius $2$.
	
	\paragraph{Auxiliary derivative estimates.}
	Set $\gamma_0:=1/2$, $\gamma_1:=1/4$, and $\gamma_2:=1/8$.
	The definition of $\gamma$ gives
	$\|\gamma\|_\infty\le\gamma_0$,
	$\|\gamma'\|_\infty\le\gamma_1$, and
	$\operatorname{Lip}(\gamma')\le\gamma_2$.
	For any $a\in\mathbb R^m$, the product rule yields
	\begin{equation}
		\label{eq:new-C-T-gradient}
		\sum_i a_i\nabla T_i(u)
		=
		\sum_i a_i\gamma(u_i)\nabla W_i(u)
		+\sum_i a_iW_i(u)\gamma'(u_i)e_i.
	\end{equation}
	By \eqref{eq:weighted-first-bound} and
	$\sum_iW_i(u)^2\le1$,
	\begin{equation}
		\label{eq:new-C-T-first-bound}
		\left\|\sum_i a_i\nabla T_i(u)\right\|
		\le C_T\|a\|_\infty,
		\qquad C_T:=\gamma_0C_W+\gamma_1.
	\end{equation}
	Subtracting \eqref{eq:new-C-T-gradient} at $u$ and $u'$ gives
	\begin{equation}
		\label{eq:new-C-T-difference}
		\begin{aligned}
			\sum_i a_i\bigl(\nabla T_i(u)-\nabla T_i(u')\bigr)
			={}&
			\sum_i a_i\gamma(u_i)
			\bigl(\nabla W_i(u)-\nabla W_i(u')\bigr)\\
			&+\sum_i a_i\bigl(\gamma(u_i)-\gamma(u_i')\bigr)\nabla W_i(u')\\
			&+\sum_i a_i\bigl(W_i(u)-W_i(u')\bigr)\gamma'(u_i)e_i\\
			&+\sum_i a_iW_i(u')
			\bigl(\gamma'(u_i)-\gamma'(u_i')\bigr)e_i.
		\end{aligned}
	\end{equation}
	By \eqref{eq:weighted-second-bound},
	\eqref{eq:weighted-first-bound},
	\eqref{eq:weight-derivatives}, and the bounds on $\gamma$,
	the four terms have norms at most
	$\gamma_0D_W$, $\gamma_1C_W$, $\gamma_1C_W$, and $\gamma_2$,
	respectively, multiplied by $\|a\|_\infty\|u-u'\|$.
	Thus, with $D_T:=\gamma_0D_W+2\gamma_1C_W+\gamma_2$,
	\begin{equation}
		\label{eq:new-C-T-second-bound}
		\left\|\sum_i a_i\bigl(\nabla T_i(u)-\nabla T_i(u')\bigr)\right\|
		\le D_T\|a\|_\infty\|u-u'\|.
	\end{equation}
	The weighted first-derivative bounds also imply that $W$ and
	$T:=(T_1,\ldots,T_m)$ are $C_W$- and $C_T$-Lipschitz,
	respectively, in the Euclidean norm.
	
	\paragraph{(i) Smoothness, dual concavity, and the value function.}
	\emph{Joint smoothness.}
	Direct differentiation gives
	\[
	\begin{aligned}
		\nabla_u\mathcal F_m(u,v)
		&=\nabla H_m(u)+\frac1{\sqrt2}
		\sum_i\bigl(v_1^i\nabla W_i(u)-v_2^i\nabla T_i(u)\bigr),\\
		[\nabla_v\mathcal F_m(u,v)]_{v^i}
		&=-B_2v^i+\frac1{\sqrt2}\bigl(W_i(u),-T_i(u)\bigr)^\top.
	\end{aligned}
	\]
	Set $A:=H_0+\sqrt2(D_W+D_T)$ and
	$B:=\sqrt{C_W^2+C_T^2}/\sqrt2$.
	For $\|v\|,\|v'\|\le2$, the preceding derivative estimates give
	$\|\nabla_u\mathcal F_m(u,v)-\nabla_u\mathcal F_m(u',v')\|
	\le A\|u-u'\|+B\|v-v'\|$ and
	$\|\nabla_v\mathcal F_m(u,v)-\nabla_v\mathcal F_m(u',v')\|
	\le B\|u-u'\|+(5/2)\|v-v'\|$.
	Here we used $\|B_2\|_{\rm op}=5/2$ and
	$\|(v_1^i)_i\|_\infty,\|(v_2^i)_i\|_\infty\le2$.
	Define the dimension-independent constant
	\[
	C_*:=\max\left\{
	1,C_\varphi,
	\left\|\begin{pmatrix}A&B\\B&5/2\end{pmatrix}\right\|_{\rm op}
	\right\}.
	\]
	Then $\nabla\mathcal F_m$ is $C_*$-Lipschitz on
	$\mathbb R^m\times\zeta Y$, so the chain rule gives
	$\operatorname{Lip}(\nabla F_{\rm C})
	\le\alpha C_*\max\{\beta^{-2},\zeta^2\}$.
	
	Since $\beta\zeta=96C_*\varepsilon/(c_bLD_Y)$ and
	$\alpha=72C_*\varepsilon^2/(c_b^2L)$, a sufficiently small
	universal $c_0>0$ ensures $\beta\zeta\le1$ and
	$\Delta/(10\alpha)\ge2$ throughout the stated accuracy range.
	In particular, $m\ge1$. Using $\alpha=L\beta^2/(8C_*)$,
	we obtain
	\begin{equation}
		\label{eq:new-C-app-smoothness}
		\operatorname{Lip}(\nabla F_{\rm C})
		\le\frac{\alpha C_*}{\beta^2}=\frac L8.
	\end{equation}
	
	\emph{Dual strong concavity.}
	The eigenvalues of $B_2$ are $1/2$ and $5/2$.
	Since $\nabla_{yy}^2F_{\rm C}
	=-\alpha\zeta^2\operatorname{diag}(B_2,\ldots,B_2)$,
	the function $F_{\rm C}(x,\cdot)$ is
	$\alpha\zeta^2/2$-strongly concave.
	
	\emph{Value function and initial gap.}
	For fixed $u$, the unconstrained dual maximizer is
	\begin{equation}
		\label{eq:new-C-dual-maximizer}
		v^{i,*}(u)
		=\frac{W_i(u)}{\sqrt2}B_2^{-1}
		\bigl(1,-\gamma(u_i)\bigr)^\top,
		\qquad i\in[m].
	\end{equation}
	Using $\|B_2^{-1}\|_{\rm op}=2$, $|\gamma|\le1/2$, and
	$\sum_iW_i(u)^2\le1$, we obtain
	$\|v^*(u)\|^2\le2\sum_iW_i(u)^2(1+\gamma(u_i)^2)\le5/2<4$.
	Thus $v^*(u)\in\zeta Y$.
	Since $B_2^{-1}=\frac15\left(\begin{smallmatrix}6&4\\4&6\end{smallmatrix}\right)$,
	substituting \eqref{eq:new-C-dual-maximizer} shows that the
	maximum contributed by block $i$ is
	$\frac14W_i(u)^2(1,-\gamma(u_i))B_2^{-1}
	(1,-\gamma(u_i))^\top=W_i(u)^2\mathcal K(u_i)$.
	Consequently,
	\begin{equation}
		\label{eq:new-C-app-value}
		\Phi_{\rm C}(x)=\max_{y\in Y}F_{\rm C}(x,y)
		=\alpha\varphi_m(x/\beta).
	\end{equation}
	Lemma~\ref{lem:common-chain}(ii) and the choice of $m$ give
	$\Phi_{\rm C}(0)-\inf_x\Phi_{\rm C}(x)
	\le5\alpha m\le\Delta/2\le\Delta$.
	Together with the domain choices, this proves
	$F_{\rm C}\in\mathcal F_{\rm NC\text{-}C}(L,\Delta,D_Y)$
	and completes (i).
	
	\paragraph{(ii) Unbiasedness, variance, and mean-square smoothness.}
	Let $\mathsf R$ be the orthogonal map that exchanges the two
	coordinates in each dual block.
	Comparing \eqref{eq:new-C-gradient} with $\nabla F_{\rm C}$ gives
	\begin{equation}
		\label{eq:new-C-app-oracle-decomposition}
		G_{\rm C}(z;\omega)
		=\nabla F_{\rm C}(z)
		+\left(\frac{\omega}{p}-1\right)\mathcal A(z),
		\qquad
		\mathcal A(z):=(0,\alpha\zeta\mathsf Rv).
	\end{equation}
	The Bernoulli identities give
	$\mathbb E_\omega(\omega/p-1)=0$ and
	$\mathbb E_\omega(\omega/p-1)^2
	=(1-p)/p=\sigma^2/(4\alpha^2\zeta^2)$, including when
	$\sigma=0$. Hence the oracle is unbiased.
	Since $\|\mathcal A(z)\|\le2\alpha\zeta$, its variance satisfies
	$\mathbb E_\omega\|G_{\rm C}(z;\omega)-\nabla F_{\rm C}(z)\|^2
	=((1-p)/p)\|\mathcal A(z)\|^2\le\sigma^2$.
	
	For the MSS bound, use the same sample at $z=(x,y)$ and
	$z'=(x',y')$.
	Orthogonality of $\mathsf R$ gives
	$\|\mathcal A(z)-\mathcal A(z')\|
	=\alpha\zeta^2\|y-y'\|$.
	The centered coefficient in
	\eqref{eq:new-C-app-oracle-decomposition} makes the cross term
	vanish in expectation. Thus the expected squared gradient
	difference equals
	$\|\nabla F_{\rm C}(z)-\nabla F_{\rm C}(z')\|^2
	+((1-p)/p)\|\mathcal A(z)-\mathcal A(z')\|^2$.
	By \eqref{eq:new-C-app-smoothness}, this is at most
	$(L^2/64+4\sigma^2/D_Y^2)\|z-z'\|^2$.
	Since $\bar L\ge L$ and $\sigma\le D_Y\bar L/4$,
	the coefficient satisfies
	$L^2/64+4\sigma^2/D_Y^2\le17\bar L^2/64\le\bar L^2$.
	This proves (ii).
	
	\paragraph{(iii) Moreau-envelope stationarity barrier.}
	By \eqref{eq:new-C-app-value} and $C_\varphi\le C_*$,
	$\operatorname{Lip}(\nabla\Phi_{\rm C})
	\le\alpha C_\varphi/\beta^2\le L/8$.
	Set $b:=3\varepsilon/2$.
	The parameter choices give $\alpha c_b/(2\beta)=b$ and
	$b/(L\beta)=c_b/(16C_*)\le1$, where we use
	$c_b=1/100$ and $C_*\ge1$.
	Thus $b\le\min\{L\beta,\alpha c_b/(2\beta)\}$.
	Since $X=\mathbb R^m$, Lemma~\ref{nccmss:lem:moreau-transfer}
	yields $S_{\rm OS}(x)>3\varepsilon/2$ for every $x$ with
	$x_m=0$. This proves (iii).
\end{proof}

\section{Proof of Lemma~\ref{lem:bv-class}}
\label{app:bv}
\label{app:bv:construction}

For $t\ge0$, define the clipping function
\[
\chi(t):=
\begin{cases}
	t, & 0\le t\le1/4,\\
	\frac43t^3-3t^2+\frac94t-\frac7{48}, & 1/4<t<3/4,\\
	5/12, & t\ge3/4,
\end{cases}
\]
and extend it to $\mathbb R$ by odd symmetry.
Using the constants from Appendix~\ref{app:common}, set
\begin{equation}
	\label{eq:bv-constants}
	C_{\rm B}:=\max\{1,8(H_0+2\sqrt2C_2),128C_1^2,2C_\varphi\},
	\qquad C_{\rm V}:=2(C_1^2+k_+).
\end{equation}
Both constants are independent of $m$.

\begin{proof}
	Write $u:=x/\beta$ and $\rho:=\sqrt{2\mu\alpha}$.
	Choose $c_0\le c_b/\sqrt{10240C_{\rm B}}$.
	Then $\alpha=256C_{\rm B}\varepsilon^2/(c_b^2L)\le\Delta/40$,
	so $m=\lfloor\Delta/(5\alpha)\rfloor\ge1$.
	
	\paragraph{(i)--(ii) Admissibility and the value function.}
	\emph{Dual maximization and initial gap.}
	For fixed $x$, the unconstrained dual maximizer is
	$y^*(x)=(\rho/\mu)G(u)$.
	By \eqref{eq:G-operator-bounds},
	$\|G(u)\|\le\sqrt{k_+}<\sqrt2$, so
	$\|y^*(x)\|<2\sqrt{\alpha/\mu}$.
	Thus $y^*(x)\in Y$, and substitution gives
	$\Phi_{\rm BV}(x)=\alpha H_m(u)+\alpha\|G(u)\|^2
	=\alpha\varphi_m(u)$, proving \eqref{eq:bv-value}.
	Lemma~\ref{lem:common-chain}(ii) then yields
	$\Phi_{\rm BV}(0)-\inf_x\Phi_{\rm BV}(x)\le5\alpha m\le\Delta$.
	Moreover, $\nabla_{yy}^2F_{\rm BV}=-\mu I$, so
	$F_{\rm BV}(x,\cdot)$ is $\mu$-strongly concave.
	
	\emph{Joint smoothness.}
	Direct differentiation gives
	\begin{align}
		\nabla_xF_{\rm BV}(x,y)
		&=\frac{\alpha}{\beta}\nabla H_m(u)
		+\frac{\rho}{\beta}\nabla G(u)^\top y,
		\label{eq:bv-mean-x-gradient}\\
		\nabla_yF_{\rm BV}(x,y)
		&=\rho G(u)-\mu y.
		\label{eq:bv-mean-y-gradient}
	\end{align}
	Let $z=(x,y)$ and $z'=(x',y')$ belong to $X\times Y$.
	Using $\rho\|y\|\le2\sqrt2\,\alpha$ together with
	\eqref{eq:H-gradient-bound}, \eqref{eq:G-derivatives}, and
	\eqref{eq:G-operator-bounds}, we obtain
	\begin{equation}
		\label{eq:bv-primal-gradient-difference}
		\|\nabla_xF_{\rm BV}(z)-\nabla_xF_{\rm BV}(z')\|
		\le
		\frac{\alpha(H_0+2\sqrt2C_2)}{\beta^2}\|x-x'\|
		+\frac{\rho C_1}{\beta}\|y-y'\|.
	\end{equation}
	Similarly,
	\begin{equation}
		\label{eq:bv-dual-gradient-difference}
		\|\nabla_yF_{\rm BV}(z)-\nabla_yF_{\rm BV}(z')\|
		\le\frac{\rho C_1}{\beta}\|x-x'\|+\mu\|y-y'\|.
	\end{equation}
	Since $\alpha=L\beta^2/C_{\rm B}$ and $\kappa\ge2$,
	\eqref{eq:bv-constants} gives
	$\alpha(H_0+2\sqrt2C_2)/\beta^2\le L/8$,
	$\rho C_1/\beta=LC_1\sqrt{2/(\kappa C_{\rm B})}\le L/8$,
	and $\mu\le L/2$.
	Combining the two block estimates and applying Cauchy--Schwarz
	therefore yields
	$\|\nabla F_{\rm BV}(z)-\nabla F_{\rm BV}(z')\|
	\le(\sqrt{29}L/8)\|z-z'\|\le L\|z-z'\|$.
	Together with the domain choices, dual strong concavity, and
	the initial-gap bound, this proves
	$F_{\rm BV}\in\mathcal F_{\rm NC\text{-}SC}(L,\mu,\Delta)$.
	
	\paragraph{(iii) Unbiasedness and bounded variance.}
	On the middle branch,
	$\chi'(t)=4(t-3/4)^2$ and $\chi''(t)=8t-6$.
	The values and first derivatives match at both endpoints.
	The odd extension is linear near zero, and its piecewise second
	derivative has absolute value at most $4$.
	Hence $\chi\in C^{1,1}(\mathbb R)$, with
	$|\chi(t)|\le5/12$, $0\le\chi'(t)\le1$,
	$\chi(0)=0$, and $\chi'(0)=1$.
	Consequently, $\chi_\beta(t):=\beta\chi(t/\beta)$ satisfies
	$\|\chi_\beta(y)\|_\infty\le5\beta/12$,
	$|\chi_\beta'(t)|\le1$, $\chi_\beta(0)=0$, and
	$\chi_\beta'(0)=1$.
	
	Write
	$G_{\rm BV}(z;\omega)=\nabla F_{\rm BV}(z)
	+(\omega/p-1)\mathcal A_{\rm BV}(z)$, where
	\[
	\mathcal A_{\rm BV}(z):=
	\rho
	\begin{pmatrix}
		\beta^{-1}\nabla G(u)^\top\chi_\beta(y)\\
		\bigl(G_i(u)\chi_\beta'(y_i)\bigr)_{i=1}^{m}
	\end{pmatrix}.
	\]
	By \eqref{eq:G-operator-bounds}, the two blocks inside this
	vector have norms at most $5C_1/12$ and $\sqrt{k_+}$,
	respectively. Thus
	$\|\mathcal A_{\rm BV}(z)\|^2
	\le2\mu\alpha((5C_1/12)^2+k_+)\le C_{\rm V}\mu\alpha$.
	The choice of $p$ gives $0<p\le1$,
	$\mathbb E_\omega(\omega/p-1)=0$, and
	$\mathbb E_\omega(\omega/p-1)^2
	=(1-p)/p=\sigma^2/(C_{\rm V}\mu\alpha)$.
	These identities remain valid when $\sigma=0$, in which case
	$p=1$.
	Hence the oracle is unbiased and
	$\mathbb E_\omega\|G_{\rm BV}(z;\omega)-\nabla F_{\rm BV}(z)\|^2
	=((1-p)/p)\|\mathcal A_{\rm BV}(z)\|^2\le\sigma^2$.
	
	\paragraph{(iv) Coordinate revelation.}
	Set $\eta:=\omega/p-1$. The oracle blocks are
	\[
	\begin{aligned}
		G_{{\rm BV},x}(z;\omega)
		&=\frac{\alpha}{\beta}\nabla H_m(u)
		+\frac{\rho}{\beta}\sum_i
		\bigl(y_i+\eta\chi_\beta(y_i)\bigr)\nabla G_i(u),\\
		[G_{{\rm BV},y}(z;\omega)]_i
		&=\rho G_i(u)-\mu y_i+\eta\rho G_i(u)\chi_\beta'(y_i).
	\end{aligned}
	\]
	Suppose a query is supported on the first $j$ coordinate pairs,
	where $0\le j<m$.
	The separability of $H_m$ and $S'(0)=J'(0)=0$ imply
	$\operatorname{supp}(\nabla H_m(u))\subseteq[j]$.
	For $i\le j$, $G_i$ depends only on $u_1,\ldots,u_i$;
	for $i>j$, its coefficient in the primal response is
	$y_i+\eta\chi_\beta(y_i)=0$.
	Thus the primal response is supported on $[j]$.
	
	For $i>j$, the identities $y_i=0$ and $\chi_\beta'(0)=1$
	give $[G_{{\rm BV},y}(z;\omega)]_i=(\rho\omega/p)G_i(u)$.
	If $i\ge j+2$, the product defining $W_i(u)$ contains
	$S(u_{j+1})=S(0)=0$, so $G_i(u)=0$.
	The only potentially nonzero response component beyond the
	first $j$ pairs is therefore the dual component at $j+1$,
	which vanishes when $\omega=0$.
	A successful round can reveal at most the next pair, whereas
	a failed round reveals no pair beyond the existing prefix.
	The case $j=m$ is immediate.
	
	Let $j_t$ be the largest index of a pair containing a coordinate
	revealed by the end of round $t$, with $j_0:=0$.
	By zero-respectingness, every query in round $t$ is supported on
	the first $j_{t-1}$ pairs.
	Since all queries are chosen before that round's responses and
	share $\omega_t$, the preceding support calculation applies
	to the entire batch.
	Hence $j_t\le\min\{m,j_{t-1}+\omega_t\}$, and induction gives
	$j_R\le\sum_{t=1}^{R}\omega_t$.
	If $\widehat x_m\ne0$, the output restriction requires that
	$x_m$ has been revealed, so $j_R=m$. Therefore,
	\begin{equation}
		\label{eq:bv-delayed-revelation}
		\widehat x_m=0
		\qquad\text{whenever}\qquad
		\sum_{t=1}^{R}\omega_t<m.
	\end{equation}
	Exact function values do not enlarge the revealed-coordinate
	set and therefore do not affect this argument.
	This proves (iv) for arbitrary finite nonadaptive same-sample
	batches and completes the proof.
\end{proof}

\section{Proof of Lemma~\ref{lem:mss-class}}
\label{app:mss:construction}

\begin{proof}
	Recall that $h(y):=\delta\lambda\varphi_m(y/\lambda)$,
	$F_{\rm MS}(x,y)=h(y)-\mu\|y-ax\|^2$, and
	$\ell:=\delta C_\varphi/\lambda$.
	By Lemma~\ref{lem:common-chain}(i), $h$ is $\ell$-smooth.
	Throughout, we use $\kappa\ge8$, $a^2=\kappa/8$, and
	$\ell\le\mu=L/\kappa$.
	
	\paragraph{(i) Membership in the NC--SC class.}
	\emph{Dual strong concavity and existence of the maximizer.}
	The smoothness inequality for $h$ implies that
	$F_{\rm MS}(x,\cdot)$ is $(2\mu-\ell)$-strongly concave,
	and hence $\mu$-strongly concave.
	Its quadratic upper bound at $y=0$ tends to $-\infty$
	as $\|y\|\to\infty$.
	Continuity and strong concavity therefore guarantee a unique
	dual maximizer $y^*(x)$.
	
	\emph{Joint smoothness.}
	The quadratic term $-\mu\|y-ax\|^2$ has gradient Lipschitz
	constant $2\mu(1+a^2)=2L/\kappa+L/4\le L/2$.
	Since $\ell\le\mu\le L/8$, we obtain
	\begin{equation}
		\label{eq:mss-joint-smoothness}
		\|\nabla F_{\rm MS}(z)-\nabla F_{\rm MS}(z')\|
		\le(\ell+L/2)\|z-z'\|\le(5L/8)\|z-z'\|.
	\end{equation}
	In particular, $F_{\rm MS}$ is jointly $L$-smooth.
	
	\emph{Initial value-function gap.}
	Let $h_*:=\inf_{y\in\mathbb R^m}h(y)$.
	Lemma~\ref{lem:common-chain}(ii) and the choice of $m$ give
	$h(0)-h_*\le5\delta\lambda m\le\Delta/2$.
	Applying the smoothness inequality at $-\nabla h(0)/\ell$ yields
	\begin{equation}
		\label{eq:mss-h-gradient-at-zero}
		\|\nabla h(0)\|^2\le2\ell\bigl(h(0)-h_*\bigr).
	\end{equation}
	For every $x$, choosing $y=ax$ gives
	$\Phi_{\rm MS}(x)\ge h(ax)\ge h_*$.
	At $x=0$, the smoothness inequality gives the quadratic upper
	bound $F_{\rm MS}(0,y)\le h(0)+\langle\nabla h(0),y\rangle
	-(2\mu-\ell)\|y\|^2/2$.
	Maximizing this bound yields
	\begin{equation}
		\label{eq:mss-value-at-zero}
		\Phi_{\rm MS}(0)\le h(0)+\frac{\|\nabla h(0)\|^2}{2(2\mu-\ell)}.
	\end{equation}
	Combining \eqref{eq:mss-h-gradient-at-zero} and
	\eqref{eq:mss-value-at-zero} with $\inf_x\Phi_{\rm MS}(x)\ge h_*$
	gives
	$\Phi_{\rm MS}(0)-\inf_x\Phi_{\rm MS}(x)
	\le[2\mu/(2\mu-\ell)](h(0)-h_*)\le\Delta$.
	This proves
	$F_{\rm MS}\in\mathcal F_{\rm NC\text{-}SC}(L,\mu,\Delta)$.
	
	\paragraph{(ii) Unbiasedness, variance, and mean-square smoothness.}
	Define $\mathcal A(z):=(0,\delta\nabla Q_m(y/\lambda))$.
	Then $G_{\rm MS}(z;\omega)=\nabla F_{\rm MS}(z)
	+(\omega/p-1)\mathcal A(z)$.
	The parameter choices ensure $0<p\le1$.
	The Bernoulli identities
	$\mathbb E_\omega(\omega/p-1)=0$ and
	$\mathbb E_\omega(\omega/p-1)^2=(1-p)/p$
	prove unbiasedness and, using $\|\nabla Q_m\|\le V_0$, give
	$\mathbb E_\omega\|G_{\rm MS}(z;\omega)-\nabla F_{\rm MS}(z)\|^2
	\le((1-p)/p)\delta^2V_0^2=(1-p)\sigma^2\le\sigma^2$.
	
	For the MSS bound, use the same sample at $z$ and $z'$.
	The centered coefficient makes the cross term vanish, so the
	expected squared gradient difference equals
	$\|\nabla F_{\rm MS}(z)-\nabla F_{\rm MS}(z')\|^2
	+((1-p)/p)\|\mathcal A(z)-\mathcal A(z')\|^2$.
	Since $Q_m$ is $C_Q$-smooth,
	$\|\mathcal A(z)-\mathcal A(z')\|
	\le(\delta C_Q/\lambda)\|y-y'\|$.
	Moreover, \eqref{eq:mss-scales-en} gives
	$\delta C_Q/(\lambda\sqrt p)
	=\bar L C_Q/[4(C_\varphi+C_Q)]\le\bar L/4$.
	Together with \eqref{eq:mss-joint-smoothness} and $1-p\le1$,
	these estimates bound the expected squared gradient difference
	by $(25L^2/64)\|z-z'\|^2+(\bar L^2/16)\|y-y'\|^2$.
	Since $L\le\bar L$, this is at most
	$(29\bar L^2/64)\|z-z'\|^2\le\bar L^2\|z-z'\|^2$.
	This proves (ii).
	
	\paragraph{(iii) Coordinate revelation.}
	Using $\varphi_m=H_m+Q_m$, the oracle blocks are
	\[
	\begin{aligned}
		G_{{\rm MS},x}(x,y;\omega)
		&=2\mu a(y-ax),\\
		G_{{\rm MS},y}(x,y;\omega)
		&=\delta\nabla H_m(y/\lambda)
		+\frac{\delta\omega}{p}\nabla Q_m(y/\lambda)-2\mu(y-ax).
	\end{aligned}
	\]
	Suppose a query is supported on the first $r$ coordinate pairs,
	where $0\le r<m$.
	The separability of $H_m$ and $S'(0)=J'(0)=0$ give
	$\operatorname{supp}(\nabla H_m(y/\lambda))\subseteq[r]$.
	The zero-chain property of $\varphi_m$ and
	$Q_m=\varphi_m-H_m$ then imply
	$\operatorname{supp}(\nabla Q_m(y/\lambda))\subseteq[r+1]$.
	Thus the response is supported on the first $r+1$ pairs,
	and its only potentially nonzero component beyond the first
	$r$ pairs is
	$[G_{{\rm MS},y}(x,y;\omega)]_{r+1}
	=(\delta\omega/p)[\nabla Q_m(y/\lambda)]_{r+1}$.
	A successful round can therefore reveal at most the next pair,
	whereas a failed round reveals no pair beyond the existing prefix.
	The case $r=m$ is immediate.
	
	Let $j_t$ be the largest index of a pair containing a coordinate
	revealed by the end of round $t$, with $j_0:=0$.
	By zero-respectingness, every query in round $t$ is supported
	on the first $j_{t-1}$ pairs.
	All queries are chosen before that round's responses and share
	$\omega_t$, so the support calculation applies to the entire batch.
	Hence $j_t\le\min\{m,j_{t-1}+\omega_t\}$, and induction gives
	$j_N\le\sum_{t=1}^{N}\omega_t$.
	If $\widehat x_m\ne0$, the output restriction requires that
	$x_m$ has been revealed, so $j_N=m$.
	Consequently, $\widehat x_m=0$ whenever
	$\sum_{t=1}^{N}\omega_t<m$.
	Exact function values do not enlarge the revealed-coordinate
	set and therefore do not affect this argument.
	This proves (iii) for arbitrary finite nonadaptive same-sample
	batches.
	
	\paragraph{(iv) Transfer of the gradient barrier.}
	The first-order condition is
	$\nabla h(y^*(x))=2\mu(y^*(x)-ax)$.
	Subtracting this identity at $x$ and $x'$ and using
	$\ell$-smoothness gives
	$(2\mu-\ell)\|y^*(x)-y^*(x')\|\le2\mu a\|x-x'\|$.
	Thus the maximizer map is Lipschitz continuous, and the
	value-function gradient formula gives
	$\nabla\Phi_{\rm MS}(x)=2\mu a(y^*(x)-ax)$.
	The first-order condition and $\ell$-smoothness imply
	$\|\nabla h(ax)\|\le(2\mu+\ell)\|y^*(x)-ax\|$.
	Substituting the value-function gradient formula proves
	\eqref{eq:mss-transfer-upper}.
	
	For any $x$ with $x_m=0$, Lemma~\ref{lem:common-chain}(iii)
	gives $\|\nabla h(ax)\|
	=\delta\|\nabla\varphi_m(ax/\lambda)\|\ge\delta c_b$.
	By \eqref{eq:mss-transfer-upper}, $\ell\le\mu$, and
	$a\delta c_b=4\varepsilon$, we conclude that
	$\|\nabla\Phi_{\rm MS}(x)\|
	\ge a\delta c_b/(1+\ell/(2\mu))
	\ge2a\delta c_b/3=8\varepsilon/3$.
	This proves (iv) and completes the proof.
\end{proof}

\end{document}